\newcounter{ass}
\def\O{\Omega}
\def\s{\sigma}
\def\d{\delta}
\def\l{\lambda}
\def\L{\Lambda}
\def\eps{\varepsilon}
\def\a{\alpha}
\def\b{\beta}
\def\bar{\overline}
\def\G{\Gamma}
\def\g{\gamma}
\def\disp{\displaystyle}
\def\r{\rho}
\def\dpa{\partial}
\def\t{\tau}
\def\f{\varphi}
\def\Th{\Theta}
\def\E{{\bf E}}
\def\T{\bf T^d}
\def\Prb{{\bf P}}
\def\tl{\widetilde}
\def\qs{\forall\;}
\def\td{\longrightarrow}
\def\Td{\Longrightarrow}
\def\tdeps{\xrightarrow{\eps\td 0}}
\def\N{\mathbb{N}}
\def\Z{\mathbb{Z}}
\def\R{\mathbb{R}}
\newtheorem{theo1}{Theorem}[section]
\newtheorem{defini}[theo1]{Definition}
\newtheorem{propo}[theo1]{Proposition}
\newtheorem{lem1}[theo1]{Lemma}
\newtheorem{corollary}[theo1]{Corollary}
\numberwithin{equation}{section}
\begin{document}
\title{Homogenization of a periodic semilinear  elliptic   degenerate PDE}
\author{\'Etienne Pardoux$^{(a)}$ \and Ahmadou Bamba Sow$^{(b)}$}
\date{\today}
\maketitle \noindent{\footnotesize
$^{(a)}$ CMI, LATP-UMR 6632, Universit\'e de Provence, 39 rue F.
Joliot Curie, Marseille cedex 13, FRANCE. \\ {\tt email :
pardoux@cmi.univ-mrs.fr}\\
 $^{(b)}$ LERSTAD, UFR S.A.T,
Universit\'e Gaston Berger, BP 234, Saint-Louis, SENEGAL. \\email :
{\tt
ahmadou-bamba.sow@ugb.edu.sn} }
\begin{abstract}
{\small In this paper  a  semilinear  elliptic PDE  with rapidly
oscillating  coefficients is  homogenized. The novetly  of our
result  lies  in the fact that   we allow the second order part  of
the  differential
operator  to be degenerate in some  portion   of  $\R^d$.\\
Our  fully probabilistic method is based on the  connection between   PDEs and   BSDEs with  random  terminal
time and   the  weak convergence  of a class of   diffusion  processes. \vspace{0.5cm}\\
{\bf  Keywords :}  Poisson equation,  ergodic theorem, backward
stochastic differential equation with  random time. }
\end{abstract}
%
%
\section{Introduction}
 The theory of homogenization tries to understand what equations should be used at a macroscopic level, in order to approximate the behavior of  physical phenomena described at a microscopic level by equations  with highly oscillatory coefficients. This theory has motivated the development of various notions of weak convergence in analysis, see in particular Tartar \cite{Tar}. One way to understand such convergence, at least in the case of linear or certain semilinear equations with periodic coefficients is based on a probabilistic interpretation of the equation, see among others Freidlin \cite{Frei1}, \cite{Frei2} in the linear case,  Briand  and  Hu  \cite{Bri-Hu},  Pardoux \cite{Par1}
 and Delarue \cite{Del} in the semilinear case. The last  three  papers exploit the connection between BSDEs and semilinear PDEs, see Pardoux and Rascanu \cite{Par-Rascanu}.

Recently Hairer and Pardoux \cite{Hai-Par}  have generalized the probabilistic approach to the homogenization of linear second--order periodic PDEs with periodic coefficients to systems where the matrix of second order
coefficients can be allowed to degenerate or even vanish on an open subset of $\R^d$. Those results have been extended to semilinear parabolic PDEs in Sow, Rhodes and Pardoux \cite{Sow-Rho-Par}. The aim of the present paper is to treat a class of semilinear elliptic PDEs, whose matrix of second order ocefficients is allowed to vanish in part of $\R^d$.

More precisely, we will study the homogenization of the elliptic Dirichlet boundary value problem
in the smooth
bounded domain $G \subset  \R^d$ :
\begin{equation}
\left\{
\begin{aligned}
\disp L_\eps \, u^\eps(x)   &+  f \left(\frac{x}{\eps}, x ,  \,  u^\eps(x)
, \, \dpa_x u^\eps(x) \s\left(\frac{x}{\eps}\right)\right)  =  0,
   \quad x  \in G, \\
u^\eps(x) &=  g(x), \quad   x  \in \dpa  G, \\
\end{aligned}
\right.
\end{equation}
where   the  second  order  differential   operator  with  rapidly  oscillating  coefficients,  $L_\eps$  is given  by
\begin{equation}\label{L-epsintro}
L_\eps(\cdot) =  \frac{1}{2} \sum_{i, j = 1}^d \,
a_{ij}\left(\frac{x}{\eps}\right) \, \dpa^2 _{ x_i x_j}\cdot
+ \sum_{i = 1}^d \,  \bigg[\frac{1}{\eps} b_i \left(\frac{x}{\eps}\right) +
c_i \left(\frac{x}{\eps}\right) \bigg]\, \dpa_{x_i}\cdot,
\end{equation}
 $a, b, c$  being  periodic   functions ($a   =  \s \s^*$  for
some  periodic  function  $\s$). \\

 The paper   is organized as
follows. Section 2 contains our main assumptions, some  preliminary
results including ergodicity. In
  section  3 we  prove our main   theorem, while the proof of several technical results is delayed until section 4.
\section{Diffusions with periodic coefficients}
In all what follows, we assume given a complete stochastic basis  $(\O, \,   {\cal F}, \,   ({\cal F}_t)_{0\le t \le T}, \,    \Prb)$,
 where  the filtration $  ({\cal F}_t)_{0\le t \le T}$  is generated by a  $d$-dimensional Brownian motion  $(B_t)_{0\le t \le T}$
 and       the  continuous functions
$$ b,  \;  c :  \R^d  \td  \R^d ,  \qquad  \s :  \R^d  \td  \R^d  \times  \R^d
$$
are  periodic of period  1  in each direction  of  $\R^d$. \\
Given $\eps > 0, \; x \in \R^d$, let $(X_t^{x,\eps})_{t\ge 0}$ (which will be mostly written $X_t^{\eps}$) denote the solution
of the stochastic differential equation
\begin{eqnarray}\label{diffusion-X}
\qs  t  \ge  0, \quad   X_t^\eps = x  + \int_0^t \left[\frac{1}{\eps} b
\left(\frac{X_s^\eps}{\eps}\right) +  c
 \left(\frac{X_s^\eps}{\eps}\right)\right] \, ds  + \sum_{j = 1}^d \int_0^t \s_j\left(\frac{X_s^\eps}{\eps}\right) \,
d B_s^j
\label{dif}
\end{eqnarray}
and
\begin{equation*}
L_\eps(\cdot) = \frac{1}{2} \sum_{i, j = 1}^d \,
a_{ij}\left(\frac{x}{\eps}\right) \, \dpa^2 _{ x_i x_j}\cdot
+ \sum_{i = 1}^d \,  \bigg[\frac{1}{\eps} b_i \left(\frac{x}{\eps}\right) +
c_i \left(\frac{x}{\eps}\right) \bigg]\, \dpa_{x_i}\cdot
\end{equation*}
its  infinitesimal generator where  for every  $x\in  \R^d,  \quad
a(x)  =  \s(x) \,  \s^*(x)$. Considering  the   processes  $ ({\tl
X}_t^\eps)_{t \ge 0}$ and $ ({\bar  X}_t^\eps)_{t  \ge 0}$   defined
by
$$ \qs  t  \ge 0, \quad   {\tl X}_t^\eps = \frac{1}{\eps}\, X_{\eps^2 t}^\eps \qquad;  \quad      {\bar  X}_t^\eps =
\frac{X_t^\eps}{\eps} = {\tl X}_{t/\eps^2}^\eps, $$ then there
exists a standard $d-$dimensional Brownian motion $(B_t)_{t \ge 0}$
depending on $\eps$ (in fact  $B_s^\eps = \frac{1}{\eps} B_{\eps^2
s}$ and we forget that dependence), such that  for all $t\ge0$,
\begin{equation}
 {\tl X}_t^\eps = \frac{x}{\eps} + \int_0^t [
b({\tl X}_s^\eps) + \eps \, c ({\tl X}_s^\eps)]ds + \sum_{j =
1}^d \int_0^t \s_j( {\tl X}_s^\eps) \, d B_s^j .\label{tilde X}
\end{equation}

We consider the Markov process    $({\tl X}_t^\eps)_{t  \ge 0} $
solution  of \eqref{tilde X} as taking values in the $d$ dimensional
torus ${\bf T^d} = \R^d/ \Z^d$ and denote by $p^\eps (t, x, A)$ its transition
probability. We shall write  $p(t, x,  A) $  for $p_0(t, x,  A) $. 
 
We  will  also consider  the same
equation  starting  from  $x$  but  without  the term  $\eps c$,
namely
\begin{equation}
\qs  t\ge 0,  \qquad {\tl X}_t^x  = \,  x    + \int_0^t b ({\tl
X}_s^x) \,  \, d s  + \sum_{j = 1}^d \int_0^t \s_j({\tl X}_s^x) \, d
B_s^j \label{X-x}
\end{equation}
and let  $ (J_t^{x})_{t \ge 0} $ denote the  Jacobian  of  the  stochastic
flow associated  to   $(\tl X_t^{x})_{t \ge 0}$,  that  is  the
$d  \times d $ matrix valued stochastic   process  solving
\begin{equation}
d J_t^{x}  =  D b(\tl X_t^{x}) \,  J_t^{x}  \, d t + \sum_{j = 1}^d
D \s_j(\tl X_t^{x}) \, J_t^{x}  \; d B_t^j, \qquad J_0^{x} = I.
\end{equation}
Moreover  to  the   stochastic  differential  equation satisfied by
$(\tl X_t^{x})_{t  \ge  0}$, having  in mind  Stroock-Varadhan's
support  theorem, we  associate  the following  controlled ODE
(where we use the convention  of  summation over repeated indices).
For each $x \in   {\T},  \;  \;   u   \in L_{loc}^2 (\R_+, \,
\R^d)$,  $(z_u^ {x, \eps} (t),  \,  t\ge  0)$  denotes the
solution  of
\begin{equation}\label{eq-z}
\left\{
\begin{aligned}
\dfrac{d  z_i}{dt}(t)  &=  (b_i + \eps c_i) (z(t))  - \dfrac{1}{2} \bigg(\dpa_{x_k}  \s_{ij} \s_{kj} \bigg) (z(t))   + \s_{ij}(z(t)) u_j(t); \\
z(0)  &= x.
\end{aligned}
\right. 
\end{equation}
\subsection{Assumptions and  preliminary results}
Let  us  recall  the  following
\begin{defini}
Consider   $b$  and      the  columns  vectors  $\s_j$ of   $\s$ as
vector fields  on  the  torus $\T$. We will  say  that the strong
H\"ormander condition holds  at  some  point  $x  \in  \T$ if  the
Lie algebra generated  by   $ \{ \s_j(x)\}_{1 \le  j \le d} $  spans
the  whole tangent
space  of  $\T$  at $x$. \\
We   furthermore  say  that   the  parabolic   H\"ormander condition
holds  at   $x$,  if  the   Lie  algebra generated  by the
$(d+1)$--dimensional vectors $(b, 1) \cup \{(\s_j, 0)\}_{1 \le j \le d}$
spans the whole space  $\R^{d + 1}$  at   $(x, 0)   \in  \T \times  \R$.
\end{defini}
We say  that  the drift  and   the  diffusion  coefficients satisfy
the assumption    {\bf (H1)} ((H1.1) to (H1.5))  if the following holds
\\ \\
{\bf (H1.1)} $\s,  \; b$  and  $c$ are of  class  $\mathcal{C}^\infty$.  \\ \\
{\bf (H1.2)} There  exists  a  non empty,  open  and  connected
subset  $U$  of  $\T$  on  which  the  strong   H\"ormander
conditions  holds. Futhermore,  there  exists  $t_0$  and  $\eps_0$
such  that
$$
\qs   x  \in  {\T},  \quad   0 \le \eps \le    \eps_0,  \qquad
\inf_{u  \in  L^2(0, t_0, \R^d)} \{|| u ||_{L^2}\;  ;   z_u^{x,
\eps} (t_0)   \in  U\} \quad  <  \;  \infty.
$$
{\bf (H1.3)} If $V$  denotes  the  subset   of  $\T$  where the
parabolic   H\"ormander condition holds,    $\t_V^x$   the first
hitting  time of  $V$  by  the  process  $\{\tl  X_t^x\}$,  then
$$
\inf_{t  >  0}\sup_{x  \in  {\T}}  \, \E  (|J_t^x|,  \;  \{\t_V^x \;
\ge  t\})  \;  <   \,  1.
$$

\noindent It   is  not difficult   to  verify that under {\bf (H1.1)} and {\bf
(H1.2)}  the  following  Doeblin condition is satisfied~: there
exists $t_1
> 0, \; 0 < \eps_1 \, <  \eps_0,  \;  \b  > 0$  and $\nu$  a
probability measure on  $\T$  which is  absolutely continuous with
respect to the Lebesgue  measure, s.t.  for  all $0 <  \eps <
\eps_1, \; x \in {\T}$, $A$  a  Borel  subset   of $\T$,
$$  p_\eps(t_1; x,  A) \;  \ge  \b  \,  \nu(A).$$
This ensures existence   and  uniqueness   of  a unique invariant
measure  $\mu_\eps$ of $({\tl X}_t^\eps)_{t \ge 0} $ (we shall write
$\mu$  for $\mu_0$)  and the following facts   (see \cite{Hai-Par})
\begin{lem1}[The spectral gap]
There exists $C, \r > 0$ such that for all $ 0\leq \eps\leq 1, \; t \ge
0 $  and  $ f\in L^\infty(\T)$,
$$
\bigg\vert \E f({\tl X}_t^\eps )  - \int_{\T} f(x) \, \mu_\eps (d
x)\bigg \vert \leq \; C \|f\|_\infty \, e^{-\r t}.
$$
\label{spectral}
\end{lem1}
%
%
\begin{lem1}
The  following  holds $$\mu_\eps \tdeps  \;  \mu,   \quad
\mbox{weakly}.$$
\end{lem1}
As  a  consequence    we  have the   following  sort  of  ergodic  theorem
\begin{corollary}\label{ergodique}
Let  $f \in  L^\infty({\T})$. Then   for  any   $t\ge 0$,
$$\int_0^t  f(\bar X_s^{\eps, x})   d s   \to    \;   t  \int_{\T}  f(y) \mu(dy) $$
\end{corollary}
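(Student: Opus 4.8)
### Proof plan for Corollary~\ref{ergodique}

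The plan is to reduce the convergence of the additive functional $\int_0^t f(\bar X^{\eps,x}_s)\,ds$ to an averaging statement about the rescaled process $\tl X^\eps$, and then to combine the spectral gap of Lemma~\ref{spectral} with the weak convergence $\mu_\eps\Td\mu$ of the preceding lemma. First I would change variables using the identity $\bar X^\eps_s = \tl X^\eps_{s/\eps^2}$ recorded in Section~2: writing $r = s/\eps^2$ gives
\begin{equation*}
\int_0^t f(\bar X^{\eps,x}_s)\,ds = \eps^2 \int_0^{t/\eps^2} f(\tl X^{\eps,x}_r)\,dr .
\end{equation*}
So the claim is equivalent to showing that $\eps^2\int_0^{t/\eps^2} f(\tl X^{\eps,x}_r)\,dr \to t\int_{\T} f\,d\mu$ as $\eps\to0$.

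Next I would control the expectation. By Fubini and Lemma~\ref{spectral}, for $0\le\eps\le1$,
\begin{equation*}
\left| \E\!\left[\eps^2\!\int_0^{t/\eps^2}\! f(\tl X^{\eps,x}_r)\,dr\right] - t\!\int_\T\! f\,d\mu_\eps \right|
= \eps^2\left| \int_0^{t/\eps^2}\!\Big(\E f(\tl X^{\eps,x}_r) - \textstyle\int_\T f\,d\mu_\eps\Big)dr\right|
\le \eps^2 \|f\|_\infty \frac{C}{\r},
\end{equation*}
which tends to $0$. Since $\mu_\eps\Td\mu$ weakly and $f\in L^\infty(\T)$ — here one has to be a little careful, as weak convergence only gives $\int f\,d\mu_\eps\to\int f\,d\mu$ for continuous $f$; for a general bounded Borel $f$ one invokes that $\mu$ is absolutely continuous and the densities converge appropriately (this is the content of \cite{Hai-Par}), or one first proves the statement for continuous $f$ and then extends — we get $\int_\T f\,d\mu_\eps\to\int_\T f\,d\mu$. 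Hence the expectation of the functional converges to $t\int_\T f\,d\mu$.

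It then remains to upgrade convergence in expectation to convergence in probability (or whatever mode the symbol ``$\to$'' in the statement denotes, presumably in probability, with the understanding that the limit is the deterministic constant $t\int_\T f\,d\mu$). For this I would estimate the variance. Using the Markov property and the spectral gap, for $r\le r'$,
\begin{equation*}
\bigl|\operatorname{Cov}\!\bigl(f(\tl X^{\eps,x}_r), f(\tl X^{\eps,x}_{r'})\bigr)\bigr| \le 2C\|f\|_\infty^2\, e^{-\r(r'-r)},
\end{equation*}
after centering $f$ by $\int f\,d\mu_\eps$; integrating over the square $[0,t/\eps^2]^2$ produces a double integral bounded by (const)$\cdot\|f\|_\infty^2\cdot(t/\eps^2)/\r$, so that the variance of $\eps^2\int_0^{t/\eps^2} f(\tl X^{\eps,x}_r)\,dr$ is $O(\eps^2)\to 0$. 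Chebyshev's inequality then gives convergence in probability, and combined with the convergence of the mean this yields the corollary. The main technical point to be careful about is the mixing/covariance bound used in the variance estimate — one must check that Lemma~\ref{spectral}, which is stated for expectations, gives the required exponential decorrelation uniformly in $\eps\in[0,1]$ and in the starting point; this follows from the Markov property together with a second application of the spectral gap, but it is the step where the uniformity in $\eps$ (guaranteed by the Doeblin condition holding uniformly for $\eps<\eps_1$) is genuinely used. A minor secondary point is the passage from continuous to bounded Borel test functions $f$ in the weak convergence $\mu_\eps\Td\mu$, handled as indicated above.
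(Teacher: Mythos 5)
Your proof is correct and follows essentially the route the paper intends: the corollary is stated there without proof as a direct consequence of Lemma \ref{spectral} and the weak convergence $\mu_\eps\Rightarrow\mu$, and your argument (time change $\bar X^\eps_s=\tl X^\eps_{s/\eps^2}$, spectral-gap control of the mean, Markov-property covariance bound giving a variance of order $\eps^2$, then Chebyshev) is the standard way to make that precise, with convergence in probability being exactly the mode needed later for the martingale central limit theorem. Your caveat about passing from continuous to general bounded Borel $f$ in $\int_{\T} f\,d\mu_\eps\to\int_{\T} f\,d\mu$ is well taken — the paper's statement glosses over this point too, and in the sequel the corollary is only applied to continuous integrands such as $\L\L^*$ — so nothing essential is missing.
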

We finally assume that 
\\
{\bf (H1.4)} The crucial centering condition is
satisfied :$\quad \disp \int_{\T} \, b(x) \, \mu(d x)  = 0 $.
\subsection{The Poisson equation}
Let   us  consider   the  infinitesimal   generator   $L$    of  the
$\T-$valued  diffusion  process  $(\tl  X_t^x)_{t  \ge  0}$  given
by
\begin{equation}
L  =    \dfrac{1}{2}\sum_{i,j=1}^d  (\s\s^*)_{ij}(x)  \dpa^2 _{ x_i
 x_j}  +  \sum_{i = 1}^d b_i(x)   \dpa_{ x_i}
\end{equation}
and  ${\cal  P}_t$  the  semigroup  generated    by  $(\tl  X_t^x)_{t  \ge  0}$.  \\
For   some  functions  $f \in \mathcal{C}^1(\T)$ satisfying  the centering
condition
\begin{equation}
 \int_{\T} f(x) \, \mu (d x) = 0,
\label{cond-Poisson}
\end{equation}
we   want to solve the PDE
\begin{equation}
L \widehat f (x)  + f(x) = 0 , \;\; \; x \in \T. \label{Poisson}
\end{equation}
This   will be essential in  order   to  get  rid  of  the   terms  depending  on $\eps^{-1}$
in the perturbed  equations. For  this  purpose  we recall the
following result given  in  \cite[Lemma 2.6]{Hai-Par} which will be
useful in the sequel :
\begin{lem1} \label{lemmeC1} Under   {\bf (H1.1)--(H1.3)},   ${\cal  P}_t$ maps    $\mathcal{C}^1(\T)$   into  itself   and   there
exists two  positive  constants  $K  > 0$  and    $\r   >  0$  such
that for every $f \in \mathcal{C}^1(\T)$  satisfying   \eqref{cond-Poisson}
and for every $t \ge  0$,  we  have
\begin{equation}
\|  {\cal  P}_t \,  f \|_{\mathcal{C}^1(\T)}  \,  \le   K e^{-\r  t} \|f
\|_{\mathcal{C}^1(\T)}.
\end{equation}
\end{lem1}
It follows from  Lemma  \ref{lemmeC1} the
\begin{lem1}   Under   {\bf (H1.1)--(H1.3)},
  if  $f  \in  \mathcal{C}^1(\T)$ satisfies \eqref{cond-Poisson}, then  the function  $\widehat f$ defined  by
$$  \widehat f(x)  = \;   \int_0^{+ \infty} \E_x[f(\tl
X_t)]\, dt, \quad  x \in  {\T}, $$  belongs to $\mathcal{C}^1(\T)$  and is the
unique weak sense  solution
 of equation \eqref{Poisson} which is centered with respect to
 $\mu$.
\end{lem1}
For  the  notion  of  weak sense  solution to  \eqref{Poisson}, see
\cite{Par-Ver3}.
Under the previous assumptions, for $i=1,\dots,d, $ we can consider the
following Poisson equation  on the torus $\T$:
\begin{equation}\label{poissone}
  L \widehat  b_i (\cdot)  +  b_i(\cdot)= 0.
\end{equation}
Thanks  to  Lemma \ref{lemmeC1}, for  any  $i  =  1,  \dots ,d$, the
function $\widehat b_i$ solution   of \eqref{poissone}   belongs   to
$\mathcal{C}^1(\T)$   and  is   given for any $ x \in {\T} $ by
\begin{equation}\label{explpoisson}
   \widehat  b_i
(x)=\int_0^{+\infty} \E_x [b_i (\tl X_t)]\, dt.
\end{equation}
Let    us  consider    the  constant  coefficients  $A$   and $C$
given
 by (with   $ \L (x) = (I + \dpa_x \widehat b) (x) \s(x)$)
\begin{equation*}
A  = \int_{\T} (\L \L^*) (x) \, \mu(d x)
 ;  \quad
C  = \int_{\T} ( I  + \dpa_x \widehat b)  c  (x) \, \mu(d x)
\end{equation*}
and    the   diffusion $(X^x_t)_{t\ge 0}$   given   by
\begin{equation}
\qs t \ge 0, \quad X^x_t =    x     + C t + \; A^{1/2} B_t.
\end{equation}
We state   the   following  crucial  condition   \\
{\bf (H1.5)}  The  matrix   $A$ is    positive   definite.  \\

\noindent\textbf{Remark :} \emph{
Necessary and   sufficient  condition   for  (H1.5)  to    hold  are  given  in \cite{Hai-Par} in  terms  of   the   diffusion  $(\tl X_t)_{t\ge 0}$ and  the support  of  its  invariant  measure.}  \\ \\
Recall the subset $G\subset\R^d$ from the Introduction.
Define  the  stopping times   $\t_x^\eps = \inf\{t \ge 0,\,  X_t^{\eps,x} \notin
\overline{G}\} $ and   $\t_x = \inf\{t \ge 0,\,  X_t^{x} \notin
\overline{G}\} $ (the   subscript $x$   will   be  often  omitted    for   notational  simplicity). Note    that (H1.5)  implies  that    $\t_x = 0, \, a.s.$ for  all  $x\in  \dpa G.$ \\

\noindent We  have    the following result established  in  \cite[Theorem 3.1]{Hai-Par}
\begin{propo}\label{hairer-Pard}
  Under  assumptions  {\bf
(H1)}, the   following  weak  convergence    holds
\begin{align*}
(X^{x, \eps} ,  \t_x^\eps)  \Td  (X^{x} ,  \t_x) \quad   \text{in} \quad    {\cal  C}(\R_+, \bar{G})\times \R_+.
\end{align*}
\end{propo}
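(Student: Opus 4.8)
The plan is to prove tightness of the family $\{(X^{x,\eps},\t_x^\eps)\}_{\eps>0}$ in $\mathcal C(\R_+,\bar G)\times\R_+$, identify every weak limit point with $(X^x,\t_x)$, and conclude. The first and main ingredient is a decomposition of the diffusion $X^{\eps}$ that isolates the singular $\eps^{-1}$ drift. Writing $\bar X_s^\eps = X_s^\eps/\eps$, equation \eqref{diffusion-X} gives
\begin{equation*}
X_t^\eps = x + \int_0^t\Big[\tfrac1\eps b(\bar X_s^\eps)+c(\bar X_s^\eps)\Big]ds + \sum_{j=1}^d\int_0^t\s_j(\bar X_s^\eps)\,dB_s^j.
\end{equation*}
Apply It\^o's formula to $\eps\,\widehat b(\bar X_t^\eps)$, where $\widehat b$ solves the Poisson equation \eqref{poissone}; since $L\widehat b_i = -b_i$ and the generator of $\bar X^\eps$ acts as $\eps^{-2}L_{(\eps)}$ with $L_{(\eps)}=L+\eps(\text{first order in }c)$, the troublesome $\tfrac1\eps b(\bar X_s^\eps)\,ds$ term is exactly cancelled modulo lower-order pieces. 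One is left with
\begin{equation*}
X_t^\eps = x + Ct + \int_0^t R^\eps_s\,ds + \sum_{j=1}^d\int_0^t \L_j(\bar X_s^\eps)\,dB_s^j + \eps\big(\widehat b(x/\eps)-\widehat b(\bar X_t^\eps)\big),
\end{equation*}
with $\L = (I+\dpa_x\widehat b)\s$, where $R^\eps_s$ collects terms that average to zero (handled by the spectral gap, Lemma \ref{spectral}) or carry an explicit factor $\eps$, and the boundary term is $O(\eps)$ uniformly because $\widehat b\in\mathcal C^1(\T)$.

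Second, I would establish tightness of $X^{\eps}$ in $\mathcal C(\R_+,\bar G)$ via the Kolmogorov–Aldous criterion: the $\eps$-correction term is uniformly small, the martingale part has bracket $\int_0^t(\L\L^*)(\bar X_s^\eps)\,ds$ which is Lipschitz in $t$ with a deterministic bound since $\L$ is bounded, and the drift-like remainder $\int_0^t R^\eps_s\,ds$ has increments controlled, uniformly in $\eps$, by combining the spectral-gap estimate with a standard occupation-time argument (the fluctuations of $\int_s^t (g(\bar X_r^\eps)-\langle g\rangle_{\mu_\eps})\,dr$ are $O(\eps)$ after another Poisson-equation substitution). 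Then I would identify the limit: any subsequential limit $Y$ is a continuous process; using the martingale problem, the predictable bracket of the martingale part converges to $A t$ by Corollary \ref{ergodique} (the ergodic averaging of $\L\L^*$ against $\mu$, together with $\mu_\eps\to\mu$ weakly), the drift converges to $Ct$ by the same averaging applied to $(I+\dpa_x\widehat b)c$, and the remainder vanishes; hence $Y$ solves the martingale problem for the constant-coefficient operator $\tfrac12 A:\nabla^2 + C\cdot\nabla$, which by (H1.5) is well-posed, so $Y\overset{d}{=}X^x$ of the stated form.

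Third, I would handle the joint convergence with the exit times. Since $A$ is positive definite (H1.5), the limit $X^x$ is a nondegenerate Gaussian diffusion, so for $x\in\bar G$ with $G$ smooth the exit time $\t_x=\inf\{t:X_t^x\notin\bar G\}$ is a.s.\ finite and, crucially, a.s.\ continuous as a functional on $\mathcal C(\R_+,\bar G)$ at paths of $X^x$ — the nondegeneracy forces the path to cross $\dpa G$ transversally, ruling out tangential contact with positive probability (this is the classical argument that the exit time is continuous off a null set under an interior-cone/non-characteristic-boundary condition). Given this, the continuous mapping theorem applied to $\omega\mapsto(\omega,\t(\omega))$ upgrades $X^{\eps}\Rightarrow X^x$ to $(X^{\eps},\t_x^\eps)\Rightarrow(X^x,\t_x)$, which is the claim. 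The main obstacle is precisely this last continuity-of-the-exit-map step: it must be shown that the limit law $X^x$ charges no path that touches $\dpa G$ without crossing, and since this is quoted from \cite[Theorem 3.1]{Hai-Par}, in a self-contained write-up I would invoke the standard result (e.g.\ from Stroock–Varadhan, or Billingsley) that for a nondegenerate diffusion and a smooth bounded domain the first-exit time is continuous $P_{X^x}$-almost surely; the condition $\t_x=0$ a.s.\ for $x\in\dpa G$ noted after (H1.5) is exactly what makes the argument go through on the boundary as well.
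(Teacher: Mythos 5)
You should first note that the paper does not actually prove this proposition: it is imported verbatim from \cite[Theorem 3.1]{Hai-Par}, so your sketch has to be measured against that reference rather than against an argument in the text. Your overall strategy --- corrector decomposition through the Poisson equation \eqref{poissone}, tightness, identification of any limit point via the martingale problem for $\frac12\sum_{i,j}A_{ij}\dpa^2_{x_ix_j}+\sum_i C_i\dpa_{x_i}$ (well posed by (H1.5)), and then joint convergence of the exit times by almost sure continuity of the exit functional under the limit law --- is exactly the strategy of that reference, and your treatment of the last step is right in principle: for the continuous mapping argument only the limit law matters, and since the limit is a Brownian motion with constant drift and nondegenerate covariance $A$, its paths a.s.\ leave $\bar G$ immediately upon touching $\dpa G$, so the exit map is continuous at a.e.\ path of $X^x$.

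The genuine gap is at the very first step, and it is precisely the point where the degenerate setting differs from the classical uniformly elliptic one: you ``apply It\^o's formula to $\eps\,\widehat b(\bar X^\eps_t)$'', but under (H1.1)--(H1.3) the corrector is only known to satisfy $\widehat b\in\mathcal{C}^1(\T)$ (this is all that Lemma \ref{lemmeC1} and the representation \eqref{explpoisson} give), and $L\widehat b=-b$ holds only in the weak sense; since $a=\s\s^*$ may vanish on part of $\T$, there is no elliptic regularity upgrading $\widehat b$ to $\mathcal{C}^2$, so It\^o's formula cannot be invoked as you state. The identity you need (the paper's \eqref{rel1}) is true, but it requires the dedicated approximation argument of \cite[Lemma 3.2]{Hai-Par} (regularize $\widehat b$ through the semigroup ${\cal P}_t$, use the $\mathcal{C}^1$ exponential decay of Lemma \ref{lemmeC1} to control the error, and pass to the limit); the paper itself flags this by citing that lemma when it writes \eqref{rel1}. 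Without this, the decomposition on which your whole proof rests is unjustified. Two smaller remarks: the extra ``occupation-time / second Poisson equation'' device in your tightness step is unnecessary, since after the corrector substitution the drift $(I+\dpa_x\widehat b)c(\bar X^\eps_r)$ is bounded and the increments of its integral are already Lipschitz in $t$ uniformly in $\eps$; and the identification of the bracket $\int_0^t(\L\L^*)(\bar X^\eps_s)\,ds\to At$ and of the drift $\to Ct$ should simply be run through Corollary \ref{ergodique} together with the martingale central limit theorem, which is what the text does.
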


\noindent We are    now  in  position  to  study our  main   subject.
\section{Homogenization of an elliptic  PDE}
For each  $\eps   > 0$,   we consider   the  elliptic  PDE    with
Dirichlet boundary condition
\begin{eqnarray}
\begin{cases}
\disp  L_\eps \, u^\eps(x)  \;  +    f \left(\frac{x}{\eps},  x , \; u^\eps(x), \dpa_x u^\eps(x) \s\left(\frac{x}{\eps}\right)\right)  =  0, \quad  x  \in  G, \\
u^\eps(x) =  g(x), \quad  x  \in  \dpa G
\end{cases}
\label{elliptic3}
\end{eqnarray}
where   $G \subset \R^d$  is   a smooth  bounded domain whose
boundary is of class  $\mathcal{C}^2$,  $g \in  {\cal C}^2(\dpa G) $ and
$f : \R^d  \times  \R^d \times \R  \times \R^d \td \R $ is   continuous  and
satisfies the following assumptions {\bf (H2)} (in what follows, the reader  should 
keep  in  mind   that  $y$   stands    for  $u^\eps$   and  $z$  for  $\dpa_x u^\eps$) : \\
{\bf (H2.1)} $f$  is  periodic of period one in each direction
of  $\R^d$ in  the  first  argument. \\
{\bf (H2.2)} There  exists   two constants $\mu  < 0$  and $K >
 0$  such that for every $x \in  {\T}, \;  \; (\tl x, \tl x^\prime) \in \R^d \times  \R^d$, $(y, y^\prime)  \in  \R^2$   and  $ (z, z^\prime)  \in  (\R^d)^2$,
\begin{align*}
(y   -     y^\prime)   ( f(x,  \tl x,   y, z)  - f(x,  \tl x, y^\prime, z) )&\le    \mu(y   -     y^\prime)^2  \\
|f(x,  \tl x,   y, z)  - f(x,  \tl x,   y, z^\prime)|  &\le
 K  \,  ||z-z^\prime||  \\
|f(x,  \tl x,   y, z) |  &\le   K(1  +  |y|  +   |z|)
\end{align*}
{\bf (H2.3)} There  exists  a    function  $\f\in  \mathcal{C}(\R_+,  \R_+)$  satisfying  $\f(0) = 0$  such  that  for every $x \in  {\T}, \;  \; (\tl x, \tl x^\prime) \in \R^d \times  \R^d$, $(y, z)  \in  \R\times\R^d$,
$$  |f(x,  \tl x,   y, z)  - f(x,  \tl x^\prime,   y, z)|   \le  \f(|\tl x -  \tl x^\prime|(1 + |z|)). $$
{\bf (H2.4)} There exists a constant $\l \neq 0$ such that  $\l > 2 \mu  + K^2 $  and
\begin{equation}
\sup_{x\in \bar G} \sup_{\eps > 0} \, \E_x   \, e^{\l \, \t^\eps}  \;  < \infty. \label{cond-exp}
\end{equation}
{\bf (H2.5)} For  every   $\eps  >0$,   the   set $\G^\eps = \{x \in
\dpa G : \Prb(\tau_x^\eps
  > 0)  =  0\}$ is  closed. \\

\noindent Let us consider the second order differential  operator
\begin{equation}
\bar L     = \frac{1}{2} \sum_{i, j= 1}^d\,  A_{ij} \dpa_{x_i x_j}^2
 + \sum_{i = 1}^d \, C_{i}\,  \dpa_{x_i}.
\end{equation}
We  are  interested    in   the  elliptic    PDE
\begin{eqnarray}
\begin{cases}
\bar  Lu(x)  +    \overline{f}(x,  \,  u(x), \, \dpa_x u(x))  = 0 , \;\;\;\; \; x  \in  G, \\
u(x) =  g(x),  \;\;\; \;  x \in \dpa  G.
\end{cases}
\label{lim-elliptic3}
\end{eqnarray}
where    the function $\overline{f}$ is   given  by  (recall that $
\L (x) = (I + \dpa_x \hat b) (x) \s(x)$)
\begin{equation*}
\qs  (\tl x,  \,  y, \, z)   \in   \R^d  \times  \R \times  \R^d,
\;\qquad  \overline{f} (\tl x, \, y, \, z)  =  \int_{\T}   f(x, \tl
x, \, y, \L (x)  z)\, \mu (dx).
\end{equation*}
It is  easy  to  see  that    $\overline{f}$  is jointly  continuous   and  satisfies   assumption  (H2.2). So   using the       BSDE  with  random   terminal time
 $$ Y_{t\wedge\t}^x=    g(X_\t^x)  +   \int_{t\wedge \t}^\t   \overline{f}(X_r^x,  Y_r^x,  Z_r^x)   d r    -  \int_{t\wedge \t}^\t Z_r^x d B_r, \quad   \;   t\ge 0,    $$
we  deduce       thanks  to   \cite[Corollary 6.96]{Par-Rascanu},  that   under   our  standing  assumptions,    $u(x)   =  Y_0^x  \in  \mathcal{C}(\overline{G})$  and    is  the    unique   viscosity  solution  of  \eqref{lim-elliptic3}.\\
We  now  formulate our  main  result :
\begin{theo1}\label{main-result} Under   assumptions {\bf (H1)} and  {\bf (H2)},  for all $x \in  G$,  we have
$$ u^\eps (x)  \xrightarrow[]{\eps  \td 0}  \;  u(x).  $$
\end{theo1}

\noindent Before   proving  Theorem \ref{main-result},   let  us   establish
the following technical result to     face  the   lack   of   smoothness   of   $u$ (whose proof  is  given   in  section \ref{section-proofs}).

\begin{propo}\label{regul-u}
Assume  that  (H1)   and  (H2) are  in  force. Then     there
exists  a   sequence    of   functions    $(u^n)_{n\ge 1}  \subset
\mathcal{C}^\infty(\R^d)$ satisfying
: \\
(i) There  exists  a  constant
  $\G>0$  such  that
  $$ \qs   n\ge  1,  \quad  \qs   x  \in  G,  \quad |\dpa_x  u^n| +  |u^n(x)| \le    \G.$$
 (ii) $\disp\sup_{x \in  \overline{G}}\left[|u^n(x)-u(x)|+|\partial u^n(x)-\partial u(x)|\right]\to0,  \quad   \text{as} \quad   n  \to \infty.$
  \\
(iii) The sequence    $(u^n)_{n\ge 1}$ satisfies
$\disp  \bar  Lu^n(x)  +    \overline{f}(x,  \,  u^n(x), \, \dpa_x u^n(x)) \to  0
   \; \; \text{as} \; \;   n  \to \infty$  uniformly  on
$G^\d  =   \{x\in    G  :   d(x,   G^c)  \ge \d\}$,  for    any  $\d>0$. \\
(iv) For  any $p\ge1,  \quad  {\bar  L} u^n  +  {\bar  f}(\cdot, u^n(\cdot), \dpa_x u^n(\cdot))$ is  uniformly  bounded  in  $L^p(G)$.
\end{propo}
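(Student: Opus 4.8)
The idea is to derive enough regularity of the viscosity solution $u$ of \eqref{lim-elliptic3} and then to take for $(u^n)$ a mollification of (an extension of) $u$.

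\noindent\textit{Step 1: regularity of $u$.} By (H1.5) the constant matrix $A$ is positive definite, so $\bar L$ is uniformly elliptic with smooth coefficients, and $u\in\mathcal{C}(\bar G)$ is the bounded viscosity solution of $\bar L u+\bar f(x,u,\dpa_x u)=0$ in $G$ with $u=g$ on $\dpa G$. Since $\bar f$ is continuous and, by (H2.2), has at most linear growth in the gradient, the interior gradient estimate for uniformly elliptic equations gives $u\in W^{1,\infty}_{loc}(G)$; together with $\dpa G\in\mathcal{C}^2$, $g\in\mathcal{C}^2(\dpa G)$ and the corresponding boundary gradient bound one obtains $u\in\mathcal{C}^{1,\a}(\bar G)$ for some $\a\in(0,1)$. (Alternatively this can be read off the smoothness of the transition density of the non-degenerate limit diffusion $X^x_t=x+Ct+A^{1/2}B_t$ together with the parameter dependence of the BSDE defining $u$.) Consequently $x\mapsto\bar f(x,u(x),\dpa_x u(x))$ is continuous on $\bar G$, and since a continuous viscosity solution of a uniformly elliptic linear equation with continuous right-hand side lies in $W^{2,p}_{loc}$, we get $u\in W^{2,p}(G)$ for every $p<\infty$, together with
$$\bar L u(x)+\bar f\big(x,u(x),\dpa_x u(x)\big)=0\qquad\text{for a.e. }x\in G.$$

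\noindent\textit{Step 2: construction of $(u^n)$; properties (i), (ii), (iv).} Extend $u$ to $\tl u\in\bigcap_{1\le p<\infty}W^{2,p}(\R^d)\cap\mathcal{C}^1(\R^d)$ with compact support and $\|\tl u\|_{\mathcal{C}^1(\R^d)}\le c_0\|u\|_{\mathcal{C}^1(\bar G)}$ (a bounded extension operator exists since $\dpa G\in\mathcal{C}^2$; take $c_0\ge1$). Let $(\r_n)_{n\ge1}$ be a standard mollifier, $\r_n(x)=n^d\r(nx)$, and set $u^n:=\tl u*\r_n\in\mathcal{C}^\infty(\R^d)$. Then $\|u^n\|_{\mathcal{C}^1(\R^d)}\le\|\tl u\|_{\mathcal{C}^1(\R^d)}$, which gives (i) with $\G:=c_0\|u\|_{\mathcal{C}^1(\bar G)}$, and $u^n\to u$, $\dpa_x u^n\to\dpa_x u$ uniformly on $\bar G$, which is (ii). Because $\bar L$ has constant coefficients and $\tl u\in W^{2,p}$, $\bar L u^n=(\bar L\tl u)*\r_n$, so $\|\bar L u^n\|_{L^p(G)}\le\|\bar L\tl u\|_{L^p(\R^d)}$ uniformly in $n$, while $|\bar f(\cdot,u^n,\dpa_x u^n)|$ is bounded, uniformly in $n$, by the linear growth of $\bar f$ and (i); this is (iv).

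\noindent\textit{Step 3: property (iii).} Write $h(x):=\bar f(x,u(x),\dpa_x u(x))$ and $h^n(x):=\bar f(x,u^n(x),\dpa_x u^n(x))$. Fix $\d>0$ and take $n$ large enough that $1/n\le\d/2$; then for $x\in G^\d$ and $|y|\le1/n$ one has $x-y\in G$, so, using the a.e. identity of Step 1,
$$\bar L u^n(x)=(\bar L u)*\r_n(x)=-(h*\r_n)(x),\qquad\text{hence}\qquad \bar L u^n(x)+h^n(x)=\big(h^n(x)-h(x)\big)+\big(h(x)-(h*\r_n)(x)\big).$$
The first term is bounded, uniformly on $\bar G$, by $\o\!\left(\sup_{\bar G}\big(|u^n-u|+|\dpa_x u^n-\dpa_x u|\big)\right)$, where $\o$ is a modulus of continuity of $\bar f$ on the compact set $\bar G\times[-\G,\G]\times\{z\in\R^d:|z|\le\G\}$, and therefore tends to $0$ by (ii); the second term is bounded, uniformly on $G^\d$, by the modulus of continuity at $1/n$ of the uniformly continuous function $h$ on $\bar G$, and likewise tends to $0$. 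This proves (iii).

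\noindent The only delicate point is Step 1: the $\mathcal{C}^1(\bar G)$ and $W^{2,p}_{loc}(G)$ regularity of $u$ relies crucially on the non-degeneracy of $\bar L$ furnished by (H1.5), on interior and boundary gradient estimates for semilinear uniformly elliptic equations with a nonlinearity of linear growth in the gradient, and on the smoothness of $\dpa G$ and $g$; granting this, the remaining steps are routine.
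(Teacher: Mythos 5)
Your Steps 2--4 (extension of $u$ to $W^{2,p}(\R^d)$, mollification, and the decomposition of $\bar L u^n+\bar f(\cdot,u^n,\dpa_x u^n)$ into a term that vanishes on $G^\d$ for large $n$ plus a term controlled by the uniform convergence of $(u^n,\dpa_x u^n)$ and a modulus of continuity of $\bar f$) are essentially identical to the paper's argument, and they are fine once one knows that $u\in W^{2,p}(G)$ for all $p$ and $u\in\mathcal{C}^1(\overline G)$ with $\bar L u+\bar f(\cdot,u,\dpa_x u)=0$ a.e. The problem is that this regularity statement is the whole content of the proposition, and your Step 1 asserts it rather than proves it. The sentence ``since $\bar f$ is continuous and, by (H2.2), has at most linear growth in the gradient, the interior gradient estimate for uniformly elliptic equations gives $u\in W^{1,\infty}_{loc}(G)$'' is not a valid deduction: continuity plus linear growth in $z$ is not a sufficient structure condition for a Lipschitz (or even $\mathcal{C}^1$) estimate for viscosity solutions of $\bar L u+\bar f(x,u,\dpa_x u)=0$. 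What is true requires genuinely nontrivial theorems (Ishii--Lions type Lipschitz bounds, Caffarelli--Crandall--Ko\v can--\'Swi\c ech interior $\mathcal{C}^{1,\a}$ and $W^{2,p}$ estimates for viscosity solutions with gradient-dependent nonlinearities, and their boundary counterparts), whose hypotheses on the $x$-, $y$- and $z$-dependence of $\bar f$ would have to be checked against (H2.2)--(H2.3); none of this is done, and no reference is given. The parenthetical alternative (reading regularity off the BSDE representation with the nondegenerate limit diffusion) is also not a shortcut: the terminal time is the exit time from $G$, which is not smooth in the starting point, and $\mathcal{C}^1$ regularity of $x\mapsto Y_0^x$ in this elliptic setting is precisely the kind of statement one cannot wave through. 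So, as written, Step 1 is a genuine gap.

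For comparison, the paper closes exactly this gap by a different, purely PDE route that avoids viscosity-solution regularity theory: after lifting the boundary datum by some $v\in W^{2,p}(G)$ with $v=g$ on $\dpa G$, it invokes Boccardo--Murat--Puel to get an $H^1_0(G)$ weak solution of the zero-boundary problem, bootstraps it to $W^{2,p}(G)$ for all $p$ using the Agmon--Douglis--Nirenberg estimates together with Sobolev embeddings and the linear growth of $\bar f$ (iterating roughly $d/2$ times), and only then identifies this strong solution with the probabilistically defined $u$ through uniqueness of viscosity solutions of \eqref{lim-elliptic3}. If you want to keep your viscosity-regularity route, you must state and verify the structure conditions of the regularity theorems you are invoking (interior and up to the boundary) and cite them precisely; otherwise the argument should be replaced by something like the paper's existence-plus-bootstrap-plus-uniqueness scheme.
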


\subsection{The   homogenization  property}

 Our approach to   prove  our  main  result  is purely
probabilistic and is based on BSDE techniques. The strategy consists
in introducing the unique pair $(Y_t^{\eps, x}, \; Z_t^{\eps,
x})_{0\le t \le \t^\eps}$ of ${\cal F}_t$-progressively measurable
processes solution of  the BDSE with random terminal time
\begin{eqnarray}
 \qs t \ge 0, \quad   Y_{t\wedge\tau^\eps}^{\eps, x}  =   g (X_{\t^\eps }^{\eps, x})    + \disp \int_{{t\wedge \t^\eps }}^{\t^\eps} \,
   f(\bar X_r^{\eps, x}, X_r^{\eps, x}, \; \;   Y_r^{\eps, x}, \, Z_r^{\eps, x}) \, dr   -
   \int_{{t\wedge \t^\eps }}^{\t^\eps} \, Z_r^{\eps, x} \, d B_r
\label{Y-eps}
\end{eqnarray}
satisfying
\begin{equation} \label{integrabilite} \E \bigg( \sup_{0
\le t \le \t^\eps} \, e^{\l t} |Y_t^{\eps,x}|^2 + \int_{0}^{\t^\eps}
\, \, e^{\l t}\, |Z_t^{\eps, x}|^2 \, d t \bigg) \; \; <  \; \infty.
\end{equation}
It is well known  (see \cite[Theorem 5.3]{Par3}) that
the function  $u^\eps (x)  =  Y_0^{\eps, x}$  is a viscosity
solution of  \eqref{elliptic3}.

  Let  us  consider  the   process  $M_t^\eps
= -  \int_0^t Z_r^{\eps,x} d B_r, \;  t\ge0$.  We intend to study
the tightness property of the pair of processes $(Y_{\cdot}^\eps,
M_{\cdot}^\eps)$ indexed by $\eps
> 0$  in the space ${\cal D}(\R_+;\R^{d+1}) $ (the space of right
continuous functions having left limits) equipped with the
$S$--topology  of Jakubowski (see \cite{MZH} for further details). \\
For  this end it suffices    to   establish   this   result
on   the interval  $[0, T]$  for every  $T>0$.\\
Let  us recall  that  the sequence of quasi-martingales $\{U_s^n; \,
0 \le s \le T\}$ defined on  the filtered probability space $\{\O;
{\cal F}, ({\cal F}_s)_{0\le s \le T}, \; \Prb\}$ is tight whenever
$$
\sup_n [\sup_{0\le s \le T}  \E |U_s^n|  \;  + \;  CV_T^0 (U^n)]\; <
\infty,
$$
where  $CV_T^0(U^n)$,  the so-called "conditional  variation  of
$U^n$ on $[0, \, T]$", is defined as
$$ CV_T^0 (U^n) = \sup  \E\bigg(\sum_{i = 1} |\E (U_{t_{i +1}}^n   -
U_{t_i}^n \, \mid {\cal F}_{t_i})|\bigg)
$$ and the supremum is taken over all  partitions of the  interval
 $[0, T]$.

 We   claim that (the proof    is   given   in  section \ref{section-proofs})
\begin{propo}
\label{lem:meyer} There exists a positive   constant
$C_{\ref{lem:meyer}}\,
> 0$ such that
\begin{align*}
\sup_{\eps >  0} \,     \E \Big[\sup_{0\le r\le\tau^\eps}e^{\l r}|Y_r^\eps|^2\,
+ \int_0^{\t^\eps} e^{\l r}|Z_r^\eps|^2\, d
 s\Big]
\; \le  C_{\ref{lem:meyer}}.
\end{align*}
\end{propo}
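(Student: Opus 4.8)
The plan is to derive the a priori bound directly from the BSDE \eqref{Y-eps} by an Itô–type estimate against the exponential weight $e^{\lambda t}$, exactly as one does for BSDEs with random terminal time under a monotonicity condition. First I would apply Itô's formula to $e^{\lambda (t\wedge\tau^\eps)}|Y_{t\wedge\tau^\eps}^\eps|^2$ between $t\wedge\tau^\eps$ and $\tau^\eps$. This produces the boundary term $e^{\lambda\tau^\eps}|g(X_{\tau^\eps}^\eps)|^2$, a term $\lambda\int e^{\lambda r}|Y_r^\eps|^2\,dr$, the quadratic-variation term $\int e^{\lambda r}|Z_r^\eps|^2\,dr$, a cross term $2\int e^{\lambda r} Y_r^\eps f(\bar X_r^\eps,X_r^\eps,Y_r^\eps,Z_r^\eps)\,dr$, and a stochastic integral against $B$. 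For the cross term I would split $f(\cdot,Y,Z)=[f(\cdot,Y,Z)-f(\cdot,0,Z)]+[f(\cdot,0,Z)-f(\cdot,0,0)]+f(\cdot,0,0)$ and use the three bullets of (H2.2): monotonicity gives $2Y_r^\eps[f(\cdot,Y_r^\eps,Z_r^\eps)-f(\cdot,0,Z_r^\eps)]\le 2\mu|Y_r^\eps|^2$; the Lipschitz bound in $z$ together with Young's inequality gives $2Y_r^\eps[f(\cdot,0,Z_r^\eps)-f(\cdot,0,0)]\le (K^2+\delta)|Y_r^\eps|^2+\delta^{-1}|Z_r^\eps|^2$ for any $\delta>0$; and the growth bound gives $|f(\cdot,0,0)|\le K$, so $2Y_r^\eps f(\cdot,0,0)\le |Y_r^\eps|^2+K^2$. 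Since (H2.4) assumes $\lambda>2\mu+K^2$, choosing $\delta$ small enough makes the coefficient $2\mu+K^2+\delta-\lambda$ of $\int e^{\lambda r}|Y_r^\eps|^2\,dr$ strictly negative while leaving the coefficient $1-\delta^{-1}$ of $\int e^{\lambda r}|Z_r^\eps|^2\,dr$ strictly negative as well (after first fixing $\delta<1$); absorbing these two negative terms to the left yields, with $t=0$,
\begin{equation*}
\E\Big[|Y_0^\eps|^2 + c\int_0^{\tau^\eps} e^{\lambda r}\big(|Y_r^\eps|^2+|Z_r^\eps|^2\big)\,dr\Big]\le \E\big[e^{\lambda\tau^\eps}|g(X_{\tau^\eps}^\eps)|^2\big] + C\,\E\!\int_0^{\tau^\eps} e^{\lambda r}\,dr
\end{equation*}
for some $c,C>0$ independent of $\eps$.

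The right-hand side is controlled uniformly in $\eps$ by (H2.4): since $g\in\mathcal C^2(\partial G)$ is bounded on the compact $\partial G$, $\E[e^{\lambda\tau^\eps}|g(X_{\tau^\eps}^\eps)|^2]\le \|g\|_\infty^2\,\E e^{\lambda\tau^\eps}\le \|g\|_\infty^2\sup_{x,\eps}\E_x e^{\lambda\tau^\eps}<\infty$, and likewise $\E\int_0^{\tau^\eps}e^{\lambda r}\,dr\le \lambda^{-1}\E e^{\lambda\tau^\eps}$ is uniformly bounded. This already gives the uniform bound on $\E\int_0^{\tau^\eps}e^{\lambda r}|Z_r^\eps|^2\,dr$ and on $\sup_r$ is not yet obtained. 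To get the supremum bound on $\E\sup_{0\le r\le\tau^\eps}e^{\lambda r}|Y_r^\eps|^2$ I would return to the Itô expansion of $e^{\lambda(t\wedge\tau^\eps)}|Y_{t\wedge\tau^\eps}^\eps|^2$ written from $t\wedge\tau^\eps$ up to $\tau^\eps$, take the supremum over $t\in[0,\tau^\eps]$ and then expectations, and handle the martingale term $\sup_t\big|\int_{t\wedge\tau^\eps}^{\tau^\eps} e^{\lambda r}Y_r^\eps Z_r^\eps\,dB_r\big|$ by the Burkholder–Davis–Gundy inequality: its $L^1$ norm is bounded by $C\,\E\big(\int_0^{\tau^\eps} e^{2\lambda r}|Y_r^\eps|^2|Z_r^\eps|^2\,dr\big)^{1/2}\le \tfrac12\E\sup_r e^{\lambda r}|Y_r^\eps|^2 + C'\,\E\int_0^{\tau^\eps}e^{\lambda r}|Z_r^\eps|^2\,dr$, and the last integral is already bounded by the previous step. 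Absorbing the $\tfrac12\E\sup$ term closes the estimate.

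The main technical obstacle, and the reason (H2.4) is stated the way it is, is the interplay between the exponential weight and the random, possibly unbounded, terminal time $\tau^\eps$: without integrability of $e^{\lambda\tau^\eps}$ the boundary and drift contributions on the right-hand side are not controlled, and without the strict inequality $\lambda>2\mu+K^2$ one cannot create the two negative coefficients needed to absorb both $\int e^{\lambda r}|Y^\eps|^2$ and $\int e^{\lambda r}|Z^\eps|^2$ on the left. A secondary point requiring care is that all constants produced must be traced to depend only on $\mu$, $K$, $\lambda$, $\|g\|_\infty$, and $\sup_{x,\eps}\E_x e^{\lambda\tau^\eps}$, and not on $\eps$; this is automatic here because $f$, $g$ and the constants in (H2.2)–(H2.4) are $\eps$-independent, and the only $\eps$-dependent object, $\tau^\eps$, enters solely through the quantity that (H2.4) bounds uniformly. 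Finally, the formal Itô computation is justified by a standard localization (stopping at exit times of balls and using \eqref{integrabilite} to pass to the limit), which I would mention but not carry out in detail.
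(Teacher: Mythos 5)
Your overall strategy is the same as the paper's: apply It\^o's formula to $e^{\l t}|y|^2$ on $[t\wedge\t^\eps,\t^\eps]$, use the monotonicity, Lipschitz and growth bounds of (H2.2) on the cross term $2Y^\eps f$, exploit the strict inequality $\l>2\mu+K^2$ and the uniform exponential moment \eqref{cond-exp}, and finish with Burkholder--Davis--Gundy to capture the supremum. However, there is a genuine error in the absorption step. You bound the Lipschitz contribution by $2K|Y||Z|\le (K^2+\d)|Y|^2+\d^{-1}|Z|^2$ with $\d<1$ small. This puts the large weight $\d^{-1}>1$ on $|Z|^2$, whereas the It\^o identity only supplies $+1\cdot\int e^{\l r}|Z_r^\eps|^2dr$ on the left; after moving terms you are left with the coefficient $1-\d^{-1}<0$ (you even call it ``strictly negative''), and yet your displayed conclusion claims a positive multiple $c\int e^{\l r}|Z_r^\eps|^2dr$ on the left. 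No choice of $\d$ in your parametrization works: $\d<1$ ruins the $Z$-absorption, while $\d\ge1$ forces the $|Y|^2$ coefficient up to at least $2\mu+K^2+1$, which is not dominated by $\l$ under (H2.4). Since the bound on $\E\int_0^{\t^\eps}e^{\l r}|Z_r^\eps|^2dr$ is also what lets you absorb the $\tfrac12\E\sup$ term produced by BDG, the gap propagates to the supremum estimate as well.

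The fix is exactly the paper's bookkeeping: write $2K|Y||Z|\le \frac{K^2}{\g}|Y|^2+\g|Z|^2$ with $\g<1$, which is admissible precisely because $\l>2\mu+K^2$ is strict, so one can choose $\g\in(K^2/(\l-2\mu),1)$ and $\b>0$ with $\l=2\mu+\frac{K^2}{\g}+\b$; the $|Y|^2$ terms then cancel against $-\l\int e^{\l r}|Y|^2$, a factor $(1-\g)\int e^{\l r}|Z_r^\eps|^2dr$ survives on the left, and the zeroth-order term is handled by $2Yf(\cdot,0,0)\le\b|Y|^2+\b^{-1}|f(\cdot,0,0)|^2$ (note your version $2Yf(\cdot,0,0)\le|Y|^2+K^2$ adds an extra $+1$ to the $|Y|^2$ coefficient that you did not account for either). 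Two further small points: $\l$ may be negative under (H2.4), in which case $\E\int_0^{\t^\eps}e^{\l r}dr\le|\l|^{-1}$ rather than your formula $\l^{-1}\E e^{\l\t^\eps}$; and the terminal term should simply use boundedness of $g$ together with \eqref{cond-exp}, as you do.
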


As  a  consequence,   we   have
\begin{corollary}\label{tightness}
For  any  $T>0$,  the  family  of  processes  $(Y_{\cdot}^\eps,
M_{\cdot}^\eps)$ indexed  by   $\eps>0$   is  $\Prb-$ tight   as
elements  of ${\cal D}([0, \, T], \R^{d+1})$, equipped  with   the
$S$--topology of Jakubowski.
\end{corollary}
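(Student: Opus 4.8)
The plan is to apply, componentwise, the tightness criterion for quasi-martingales in the $S$-topology recalled just above Proposition~\ref{lem:meyer}: it suffices to show that each of the $d+1$ real-valued coordinate processes of $(Y^\eps_\cdot, M^\eps_\cdot)$ — extended to all of $[0,T]$ by freezing it at time $\t^\eps$, i.e. $Y^\eps_t := Y^\eps_{t\wedge\t^\eps}$ and $Z^\eps_r := 0$ for $r>\t^\eps$, so that $M^\eps$ is constant on $[\t^\eps,T]$ — is a quasi-martingale with
$$
\sup_{\eps>0}\Big[\sup_{0\le s\le T}\E|U^\eps_s| \; + \; CV^0_T(U^\eps)\Big] \;<\;\infty ,
$$
since this is enough, by \cite{MZH}, to obtain $S$-tightness of the $\R^{d+1}$-valued family. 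First I would note that on the bounded interval $[0,T]$ the weight $e^{\l r}$ is bounded away from $0$ and $\infty$ (by $e^{-|\l|T}$ and $e^{|\l|T}$), so Proposition~\ref{lem:meyer} yields a constant $C_T$, independent of $\eps$, with $\E\big[\sup_{0\le r\le\t^\eps\wedge T}|Y^\eps_r|^2\big]\le C_T$ and $\E\int_0^{\t^\eps\wedge T}|Z^\eps_r|^2\,dr\le C_T$.

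The first-moment bound is then immediate: $\E|Y^\eps_s|\le C_T^{1/2}$, while by the It\^o isometry $\E|M^\eps_s|\le\big(\E|M^\eps_s|^2\big)^{1/2}=\big(\E\int_0^{s\wedge\t^\eps}|Z^\eps_r|^2\,dr\big)^{1/2}\le C_T^{1/2}$. The conditional variation of the martingale part is free: since $M^\eps$ is an $\mathcal{F}_t$-martingale, $\E(M^\eps_{t_{i+1}}-M^\eps_{t_i}\mid\mathcal{F}_{t_i})=0$ for every partition, hence $CV^0_T(M^\eps)=0$. For $Y^\eps$, writing \eqref{Y-eps} between two partition times $t_i<t_{i+1}$ gives
$$
Y^\eps_{t_{i+1}\wedge\t^\eps}-Y^\eps_{t_i\wedge\t^\eps}\;=\;-\int_{t_i\wedge\t^\eps}^{t_{i+1}\wedge\t^\eps} f\big(\bar X^\eps_r,X^\eps_r,Y^\eps_r,Z^\eps_r\big)\,dr\;+\;\int_{t_i\wedge\t^\eps}^{t_{i+1}\wedge\t^\eps} Z^\eps_r\,dB_r ,
$$
and taking conditional expectation kills the stochastic integral, so $\big|\E(Y^\eps_{t_{i+1}}-Y^\eps_{t_i}\mid\mathcal{F}_{t_i})\big|\le\E\big(\int_{t_i\wedge\t^\eps}^{t_{i+1}\wedge\t^\eps}|f(\bar X^\eps_r,X^\eps_r,Y^\eps_r,Z^\eps_r)|\,dr\mid\mathcal{F}_{t_i}\big)$. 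Summing over the partition and using the tower property,
$$
CV^0_T(Y^\eps)\;\le\;\E\int_0^{\t^\eps\wedge T}\big|f(\bar X^\eps_r,X^\eps_r,Y^\eps_r,Z^\eps_r)\big|\,dr ,
$$
and the linear-growth bound in (H2.2) together with Cauchy--Schwarz in $(r,\o)$ gives
$$
\E\int_0^{\t^\eps\wedge T}|f|\,dr\;\le\;K T+K\,\E\int_0^{\t^\eps\wedge T}|Y^\eps_r|\,dr+K\,\E\int_0^{\t^\eps\wedge T}|Z^\eps_r|\,dr\;\le\;K T+K T\,C_T^{1/2}+K T^{1/2}C_T^{1/2} ,
$$
which is uniform in $\eps$.

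Combining these bounds, each coordinate of $(Y^\eps_\cdot,M^\eps_\cdot)$ satisfies the quasi-martingale tightness criterion uniformly in $\eps>0$, whence the family is $\Prb$-tight in $\mathcal{D}([0,T],\R^{d+1})$ for the $S$-topology of Jakubowski; as $T>0$ is arbitrary, the corollary follows. There is no serious obstacle once Proposition~\ref{lem:meyer} is granted: the only points requiring care are the reduction of vector $S$-tightness to coordinatewise $S$-tightness (a feature of Jakubowski's topology, \cite{MZH}), checking that the stopped-and-frozen extensions past $\t^\eps$ do not affect either bound, and the passage from the $L^2$-control of $Z^\eps$ to the $L^1$-control of $f$ along the trajectory needed to estimate $CV^0_T(Y^\eps)$.
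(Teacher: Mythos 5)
Your proposal is correct and follows essentially the same route as the paper: apply the Meyer--Zheng/Jakubowski quasi-martingale criterion componentwise, noting $CV^0_T(M^\eps)=0$ for the martingale part, bounding $\sup_t\E|Y^\eps_t|$, $\sup_t\E|M^\eps_t|$ and $CV^0_T(Y^\eps)$ via the BSDE, the linear growth in (H2.2), and the uniform bounds of Proposition~\ref{lem:meyer} (with the weight $e^{\l r}$ controlled on $[0,T]$). The only differences are cosmetic (Cauchy--Schwarz in place of the paper's $x\le 1+x^2$ estimate, and an explicit remark on freezing the processes after $\t^\eps$).
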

To deal   with   the  highly    oscillating  terms (depending  on  $\eps^{-1}$)  in  the  diffusion \eqref{diffusion-X},  we  consider  the process
$({\widehat X}_t^\eps)_{t\ge 0} $(recall that $\disp
\overline{X}_t^\eps = X_t^\eps \slash \eps$) given by
\begin{align*}
\qs   t  \ge 0,  \qquad  {\widehat X}_t^\eps  =  X_t^\eps  + \eps (
\widehat b (\bar X_t^\eps)   - \widehat  b(\dfrac{x}{\eps}))
\end{align*}
Thanks to It\^o's formula (see \cite[Lemma
3.2]{Hai-Par}), we  have 
\begin{align}
\qs   t  \ge  0,\qquad  {\widehat X}_t^\eps  &= x +  \int_0^t \, (I
+ \dpa_x \hat b) \, c (\bar X_r^\eps) \, d r  + \int_0^t \, \L (\bar
X_r^\eps) \, d B_r. \label{rel1}
\end{align}
As  a  consequence  we deduce  that  the sequence
of processes $\{X_s^\eps,  \; 0\le s \le t,  \; \,  0<\eps \le
1\}$ is tight in   the space  $ \mathcal{C} ([0, t], \,\R^d)$ endowed with the
topology of  uniform  convergence. Moreover thanks to  the
martingale central limit theorem \cite[Theorem 7.1.4]{Eth-Kur}  which  follows from  Corollary \ref{ergodique}, we
have
$$
\int_0^\cdot  \L\,  (\bar  X_s^\eps) \, d B_s  \;
\Longrightarrow \; A^{1/2} \, B_\cdot \quad \text{ in
}\,\mathcal{C}([0,\, T];\R^d) \quad \text{ as }\,   \; \eps  \to  0.
$$
Hence   there   exists  a  subsequence   (still  note as   the
whole sequence)  such  that
$$ (X^\eps,  Y^\eps, M^\eps)  \Longrightarrow (X, Y, M)  \quad \text{ in }\,{\cal D}([0,T];\R^{2d+1}).$$

\noindent Let us   assume  that   the following extension of Corollary \ref{ergodique}
holds (the proof  is given   in  section  \ref{section-proofs}).
\begin{propo}\label{theoerg}
Let $\Psi:\R^d\times\R^N\rightarrow\R$ be a measurable function,
periodic with respect to its first variable, satisfying: \\
i) for any $R>0$, we can find  $K_R>0$ such that whenever
$(x,v,v')\in \R^{d}\times\R^N\times\R^N$, $|v|\leq R$ and $|v'|\leq
R$,  then we have $|\Psi(x,v)- \Psi(x,v')|\leq K_R|v-v'|$.\\
ii) there
exists $M>0$ such that for any $x\in\R^d, \;  v  \in  \R^N,
\quad   |\Psi(x, v)|\leq M  (1 + |v|)$. \\
Suppose additionally that $(V^\varepsilon)_{\varepsilon>0}$ is a
family of $\R^N$-valued processes, which is tight in ${\cal D}([0,T];\R^N)$
equipped  with   the $S$--topology  of Jakubowski and satisfies
\begin{equation}\label{hyp-unif}
\sup_{\varepsilon>0}\E(\sup_{0  \leq s \leq \t^\eps} e^{\l s}|V^\varepsilon_s|^2)<\infty.
\end{equation}
Then the following convergence holds: for any $\nu<\lambda$,
\begin{equation}\label{convergo}
 \E\Big|\int_{0}^{\t^\eps}e^{\nu r}\Psi(\bar{X}^\varepsilon_r,V^{\varepsilon}_r)\,dr- \int_{0}^{\t^\eps}e^{\nu r}\bar{\Psi}
 (V_r^\eps)\,dr\Big|\xrightarrow \; 0,\quad
  \text{ as }\varepsilon \text{ tends to }0,
\end{equation}
where $\disp\bar{\Psi}(v)=\int_{\T}\Psi(x,v)\,\mu(dx) $.
\end{propo}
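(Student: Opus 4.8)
The plan is to reduce \eqref{convergo} to the ergodic statement of Corollary~\ref{ergodique} via a discretization-in-time argument, handling the dependence on the process $V^\eps$ by a frozen-coefficient (piecewise-constant) approximation. First I would truncate: fix $R>0$ and split each integral according to whether $\sup_{0\le s\le\t^\eps}|V^\eps_s|\le R$ or not. On the bad event, the sublinear growth (ii) together with the uniform bound \eqref{hyp-unif} and the exponential integrability of $\t^\eps$ from (H2.4) lets me bound the contribution by something that tends to $0$ as $R\to\infty$, uniformly in $\eps$ (Cauchy--Schwarz, with the factor $e^{\nu r}\le e^{\l r}$ controlled since $\nu<\l$). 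So it suffices to prove the convergence with $V^\eps$ replaced by $V^\eps\mathbf{1}_{\{\sup|V^\eps|\le R\}}$, i.e.\ effectively on the set where $V^\eps$ is bounded by $R$.

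Next, on a fixed horizon $[0,T]$, I would introduce a mesh $0=t_0<t_1<\dots<t_m=T$ of size $h$ and replace $V^\eps_r$ by the piecewise-constant process $V^\eps_{t_k}$ on $[t_k,t_{k+1})$. The error from this substitution in both $\Psi(\bar X^\eps_r,\cdot)$ and $\bar\Psi(\cdot)$ is controlled by the local Lipschitz property (i): it is bounded by $K_R\int_0^{\t^\eps\wedge T}e^{\nu r}|V^\eps_r-V^\eps_{\lfloor r\rfloor_h}|\,dr$. Here is where the $S$--topology tightness of $(V^\eps)$ enters: tightness in the $S$--topology (plus the uniform $L^2$ bound) gives control of the conditional variation / oscillation, so that $\E\int_0^T|V^\eps_r-V^\eps_{\lfloor r\rfloor_h}|\,dr$ can be made small uniformly in $\eps$ by choosing $h$ small — one should quote the relevant modulus-of-continuity estimate for $S$-tight sequences (as in \cite{MZH}), or alternatively argue via the a.s.\ Skorokhod-type representation along the weakly convergent subsequence. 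This is the step I expect to be the main obstacle, since the $S$--topology does not control uniform oscillations directly; the argument must go through number-of-up-crossings / truncated-variation bounds, or be localized to continuity points of the limit.

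With $V^\eps$ frozen on each subinterval, \eqref{convergo} reduces to showing, for each fixed $k$ and each fixed value $v$, that
$$\E\Big|\int_{t_k\wedge\t^\eps}^{t_{k+1}\wedge\t^\eps}e^{\nu r}\big(\Psi(\bar X^\eps_r,v)-\bar\Psi(v)\big)\,dr\Big|\xrightarrow{\eps\to0}0.$$
Since $x\mapsto\Psi(x,v)$ is bounded on $\T$ (by (i)--(ii) and periodicity, for $|v|\le R$) and $e^{\nu r}$ is bounded on $[0,T]$, this is exactly Corollary~\ref{ergodique} applied to $f(\cdot)=\Psi(\cdot,v)$ (with the centering built into $\bar\Psi$, so $f-\bar\Psi(v)$ integrates to $0$ against $\mu$), combined with the stopping at $\t^\eps$; the stopping time is handled because $\t^\eps$ converges in law (Proposition~\ref{hairer-Pard}) and the integrand is bounded, so one may pass to the limit on $[t_k,t_{k+1}]$ first and insert the indicator $\mathbf{1}_{\{r<\t^\eps\}}$ at the end, or dominate the difference near $\t^\eps$ using the exponential moment bound. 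Strictly, one needs $\Psi(\cdot,v)$ continuous for Corollary~\ref{ergodique}; if only measurability is assumed one first approximates $\Psi(\cdot,v)$ in $L^1(\mu)$ by a continuous periodic function, absorbing the error again through the Doeblin/spectral-gap control of the occupation measure.

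Finally I would assemble the three error terms — truncation ($R$ large), time-discretization ($h$ small, uniformly in $\eps$), and the ergodic limit (finitely many terms, $\eps\to0$) — in that order: choose $R$, then $h$, then let $\eps\to0$, obtaining \eqref{convergo}. The only genuinely delicate point, as noted, is the uniform-in-$\eps$ smallness of the time-discretization error under mere $S$--tightness of $(V^\eps)$; everything else is a routine combination of Corollary~\ref{ergodique}, the growth/Lipschitz hypotheses on $\Psi$, and the exponential moment assumption (H2.4) on the exit times.
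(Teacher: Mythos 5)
Your plan is a legitimate alternative in outline, but it rests entirely on one step that you yourself flag and then do not supply: the uniform-in-$\eps$ smallness, as $h\to0$, of the time-discretization error $\E\int_0^{T}|V^\eps_r-V^\eps_{\lfloor r\rfloor_h}|\,dr$ under mere $S$--tightness. There is no quotable ``modulus-of-continuity estimate for $S$-tight sequences'' in \cite{MZH}: the Meyer--Zheng and Jakubowski frameworks are used precisely because they require no modulus-of-continuity control, and relative compactness in the $S$--topology is characterized only by sup-norm and up-crossing bounds. One can try to manufacture an integral-type modulus out of the up-crossing bounds (count the grid cells on which the path oscillates by more than $\delta$, etc.), but that is exactly the non-routine core of the argument, and it is absent; since every other piece of your scheme funnels through this estimate, this is a genuine gap rather than a missing citation. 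A second, smaller, omission: reducing the frozen-coefficient step to Corollary \ref{ergodique} is not immediate, because the frozen value $V^\eps_{t_k}$ is random (you must use that it is ${\cal F}_{t_k}$-measurable and condition), the fast process on $[t_k,t_{k+1}]$ restarts from the random point $\bar X^\eps_{t_k}$ (so you need a version of the ergodic estimate uniform in the initial condition, i.e.\ Lemma \ref{spectral} rather than the corollary as stated), and you need a finite $\delta$-net in the $v$-variable via the $K_R$-Lipschitz bound; none of this is spelled out.

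For comparison, the paper's proof sidesteps the discretization entirely. It sets $\tl\Psi=\Psi-\bar\Psi$, cuts the random horizon at a deterministic $T$ using the tightness of $\tau^\eps$ (from Proposition \ref{hairer-Pard}), rewrites the remaining integral as $\int_0^{T}e^{\nu r}\tl\Psi(\bar X^\eps_r,V^\eps_r)\,{\bf 1}_{\{r\le\tau^\eps\}}\,dr$ so that the indicator becomes one more component of a tight family, and then invokes the known averaging lemma \cite[Lemma 4.2]{Par1} to conclude convergence to $0$ in probability; finally it upgrades to the $L^1$ statement \eqref{convergo} by proving uniform integrability of the family of integrals, using the linear growth of $\Psi$, \eqref{hyp-unif} and the exponential moment \eqref{cond-exp}, with the two cases $\l>0$ and $\l<0$ treated separately (Young's inequality in the first case). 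Your truncation and Cauchy--Schwarz considerations correspond to that last uniform-integrability step and are sound in spirit; but the heart of the proposition --- averaging of the fast variable jointly with an $S$-tight parameter process --- is exactly what the paper delegates to \cite{Par1}, and what your proposal would have to re-prove from scratch via the unestablished discretization estimate.
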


From  now  on our strategy  consists in proving that the difference $Y^{\eps, x}_0-u(x)$ tends  to  0  as  $\eps$ goes  to 0.
However  in  the   following computations, we  are   faced  with the  lack  of smoothness
of  the   function  $u$.  To  overcome  the difficulty,  we approximate  the function  $u$ with  the
help  of  the  smooth  sequence  $(u^n)_{n\in\N}$ defined  in  Proposition \ref{regul-u}. Thus  we   consider,  for  every
$n \in  \N$
 the pair of processes $(\tl
Y_s^{\eps, n}  ,  \;  \tl Z_s^{\eps, n}  )_{s\ge 0}$ defined
by
\begin{align*}
\qs   s  \ge  0,  \qquad   {\tl Y}_s^{\eps,n}    =
Y_s^{\eps,x}     - u^n(\widehat X_{s\wedge\t^\eps}^\eps);  \qquad
{\tl Z}_s^{\eps,n} = Z_s^{\eps,x} -   \dpa_x u^n(\widehat
X_{s}^\eps)\L(\overline{X}_s^\eps).
\end{align*}

Our main result, Theorem \ref{main-result}, is an immediate consequence of

\begin{theo1}\label{yborne} The  following  holds \\
(i)  There   exists a constant  $C_{\ref{yborne}}   >  $0  such that
for every $\eps
>0,    n  \in \N$   and   $t \ge  0$,  we  have
$$ |{\tl Y}_t^{\eps,n}    | \,  \le  C_{\ref{yborne}} \quad   a.s.$$
(ii)
For  all  $\eta  >  0$  there  exists  an  integer   $n(\eta)$
such that   for all   $n  \ge   n(\eta)$,
\begin{equation*}
\limsup_{\eps  \td  0} |\tl Y_0^{\eps, n}| \le \eta. 
\end{equation*}
\end{theo1}
\begin{proof}
{\sc Step 1 : Proof of} (i).
It\^o's   formula   applied to the function $(t,y)
\mapsto e^{\l t}y^2$ yields that  for  every  $t \ge 0$
\begin{align*}
e^{\l t\wedge\tau^\eps} |Y_{t\wedge\tau^\eps}^{\eps,x}|^2 + \int_{t\wedge \t^\eps}^{\t^\eps} \, \,
e^{\l r} \, |Z_r^{\eps, x}|^2 \, d r = e^{\l \t^\eps}
|g(X_{\t^\eps}^\eps)|^2    - \int_{t\wedge \t^\eps}^{\t^\eps} \, \,
\l\,e^{\l r} \, |Y_r^{\eps, x}|^2 \, d r \\
+2 \int_{t\wedge \t^\eps}^{\t^\eps} \, \, e^{\l r} \, Y_r^{\eps, x}
\, f(\Th(\eps, r)) d r - 2 \int_{t\wedge \t^\eps}^{\t^\eps} \, \,
e^{\l r} \, Y_r^{\eps, x} \, Z_r^{\eps, x} \,d B_r,
\end{align*}
where   for  every  $\eps>0$  and   $r>0,  \; \Th(\eps, r)  = (\bar
X_r^{\eps, x}, X_r^{\eps, x}, Y_r^{\eps, x}, \, Z_r^{\eps, x})$. \\
Using  standard  estimates and assumptions  (H2.2),   we  have
$$ 2  e^{\l r} \, Y_r^{\eps, x}
\, f(\Th(\eps, r))   \le    e^{\l r}(K^2  +  2 \mu) |Y_r^{\eps,x}|^2  +
 e^{\l r} \,|Z_r^{\eps,x}|^2  +   \a e^{\l r}  |Y_r^{\eps,x}|^2  +   \frac{1}{\a} e^{\l r}  |f(\overline{X}_r^{\eps,x}, X_r^{\eps,x},0, 0)|^2
$$
with   $\a    =    \l  -  (2\mu  +  K^2) >0.$
Since    $g$  is bounded, there  exists    a  positive   constant   $K^\prime$   such  that for  every   $t\ge 0$,
\begin{equation*}
e^{\l t\wedge\tau^\eps} |Y_{t\wedge\tau^\eps}^{\eps,x}|^2  \le   K^\prime e^{\l \t^\eps}  + \frac{1}{\a} \int_{t\wedge
\t^\eps}^{\t^\eps} \, e^{\l r}  |f(\overline{X}_r^{\eps,x}, X_r^{\eps,x},0, 0)|^2 \,
d r  - 2 \int_{t\wedge \t^\eps}^{\t^\eps} \, \, e^{\l r} \,
Y_r^{\eps, x} \, Z_r^{\eps, x} \,d B_r
\end{equation*}
Taking   the  conditional  expectation
$\E^{{\cal F}_{t}} $, we deduce  thanks    to  assumption  (H2.2),
\begin{equation*}
\qs  \eps>0,  \;  \;  t \ge 0, \quad  |Y_t^{\eps,x}|^2  \le K^\prime \E^{{\cal F}_{t}} (e^{\l(\t^\eps  - t)^+})
\end{equation*}
From  the Markov  property,   the  term   of   the   right  hand side    is    equal  to   $\E_{X_t^\eps} \, (e^{\l\t^\eps})$. Hence from  \eqref{cond-exp}  there   exists   a    constant   $C_{\ref{yborne}}>0$  such   that
\begin{equation*}
\qs  \eps>0,  \;  \;  t \ge 0, \quad |Y_t^{\eps,x}|^2  \le  C_{\ref{yborne}}.
 \end{equation*}
(i)  follows,    since  $u^n$ is  bounded,   uniformly w. r. t. $n\ge1$. \\

\noindent{\sc Step 2 : an upper bound for $\tilde{Y}^{\eps,n}_0$.}
 Note that for every $s\ge 0$,
\begin{equation*} \widehat X_{s\wedge\t^\eps} = \widehat
X_{\t^\eps}^\eps    - \int_{s\wedge\t^\eps}^{\t^\eps} (I +
\dpa_x\widehat b)c (\overline{X}_s^\eps) d r
-\int_{s\wedge\t^\eps}^{\t^\eps}\L(\overline{X}_r^\eps)  d B_r.
\end{equation*} 
Since  $u^n   \in   C^\infty(\R^d)$,   It\^o's  Formula    yields    for  any   $s \ge  0$,
\begin{align*}
u^n(\widehat X_{s\wedge\t^\eps}^\eps)  &=u^n(\widehat
X_{\t^\eps}^\eps) - \int_{s\wedge\t^\eps}^{\t^\eps} \hat L^{\eps, n}(r) d
r - \int_{s\wedge\t^\eps}^{\t^\eps} \dpa_x u^n(\widehat X_{r}^\eps)
\L(\overline{X}_r^\eps)  d B_r,
\end{align*}
where   we   define    $$  \hat L^{\eps, n}(s) = \frac 1 2 \sum_{i, j}
(\L \L^*) (\overline{X}_s^\eps)\dpa_{x_i x_j} u^n(\widehat
X_{s}^\eps)    + \sum_i [(I +  \dpa_x \widehat b)c
(\overline{X}_s^\eps)]\dpa_x u^n(\widehat X_{s}^\eps). $$
Putting  pieces   together,  we deduce  that for any   $s \ge  0$,
\begin{equation*}
{\tl Y}_{s\wedge\tau^\eps}^{\eps,n}  =    g(X_{\t^\eps}^\eps)  - u^n(\widehat
X_{\t^\eps}^\eps) + \int_{s\wedge\t^\eps}^{\t^\eps} [f(\Th(\eps, r))
+ \hat L^{\eps, n}(r)]   d r   - \int_{s\wedge\t^\eps}^{\t^\eps}{\tl
Z}_r^{\eps,n} d B_r
\end{equation*}
Let  $ \nu  = 2\mu + K^2$. It\^o's  formula applied  to  the function
 $(t,y) \mapsto e^{\nu t}y^2$ yields   for   $t \ge   0$,
\begin{align*}
e^{\nu t \wedge \t^\eps} |{\tl Y}_{t \wedge\tau^\eps}^{\eps,n}|^2   +
\int_{t\wedge\t^\eps}^{\t^\eps}e^{\nu s} |{\tl
Z}_s^{\eps,n}|^2 d s &=e^{\nu\t^\eps} |g(X_{\t^\eps}^\eps)
- u^n(\widehat
X_{\t^\eps}^\eps)|^2   -  \int_{t\wedge\t^\eps}^{\t^\eps} \nu  \, e^{\nu s} |{\tl Y}_s^{\eps,n}|^2 d s  \\
&+2\int_{t\wedge\t^\eps}^{\t^\eps}e^{\nu s} \,\tl
Y_s^{\eps,n} \,[\hat L^{\eps, n}(s)  + f(\Th(\eps, s)]  d s
\\
&-2\int_{t\wedge\t^\eps}^{\t^\eps}e^{\nu s}\,  \tl
Y_s^{\eps,n} \, \tl Z_s^{\eps,n}   d B_s.
\end{align*}
Consider  the  decomposition
 \begin{align*} \tl
Y_s^{\eps,n} \,[\hat L^{\eps, n}(s)  + f(\Th(\eps, s)] &= \tl
Y_s^{\eps,n} \,[\hat L^{\eps, n}(s) + f(\bar X_s^{\eps, x}, X_s^{\eps,
x}, u^n (\widehat X_{s\wedge \t^\eps}^\eps), \dpa_x u^n(\widehat
X_{s}^\eps)
\L(\overline{X}_s^{\eps})] \\
&+ \tl Y_s^{\eps,n} \,[f(\Th(\eps, s) -  f(\bar X_s^{\eps, x},
X_s^{\eps, x}, u^n (\widehat X_{s\wedge \t^\eps}^\eps), \dpa_x
u^n(\widehat X_{s}^\eps) \L(\overline{X}_s^{\eps})] .
\end{align*} 
From assumption (H2.3), the second   term   of  the last right
 hand  side   is  less   than
 $$ \mu|\tl
Y_s^{\eps,n}|^2 + K |\tl Y_s^{\eps,n}| |\tl Z_s^{\eps,n}|.   $$
Hence  using  standard    estimates, we   deduce that
\begin{align*}
 |{\tl Y}_0^{\eps,n}|^2  &\le
\E(e^{\nu \t^\eps} |g(X_{\t^\eps}^\eps)- u^n(\widehat
X_{\t^\eps}^\eps)|^2)  + 2 \E
\int_{0}^{\t^\eps}e^{\nu s} \tl Y_s^{\eps,n}
[\d_{1, n}(\eps,   s)    +  \d_{2, n}(\eps,   s) ] d s \\ &
+ 2 \E
\int_{0}^{\t^\eps}e^{\nu s} \tl Y_s^{\eps,n}
[\hat L^{\eps, n}(s) -  \overline{L}  u^n (\widehat X_{s}^\eps)] ds
\end{align*}
with
\begin{align*}
\d_{1, n}(\eps,   s)    &=   f(\bar X_s^{\eps, x}, X_s^{\eps, x},
u^n (\widehat X_{s}^\eps), \dpa_x u^n(\widehat
X_{s}^\eps) \L(\overline{X}_s^{\eps}))  -  \bar f(X_s^{\eps, x}, u^n
(\widehat X_{s}^\eps), \dpa_x u^n(\widehat
X_{s}^\eps))    \\
\d_{2, n}(\eps,   s)    &=   \overline{L}   u^n (\widehat X_{s}^\eps)\,  + \bar f(X_s^{\eps,
x}, u^n (\widehat X_{s}^\eps), \dpa_x u^n(\widehat
X_{s}^\eps)).
\end{align*}
Let
\begin{align*}
C_n(\eps) &= \E(e^{\nu \t^\eps} |g(X_{\t^\eps}^\eps) -
u^n(\widehat X_{\t^\eps}^\eps)|^2)  + 2 \E
\int_{0}^{\t^\eps}e^{\nu s} \tl Y_s^{\eps,n}
[\hat L^{\eps, n}(s) -  \overline{L}  u^n (\widehat X_{s}^\eps)] ds \\
& + 2 \E
\int_{0}^{\t^\eps}e^{\nu s} \tl Y_s^{\eps,n}
\d_{1, n}(\eps,   s) ds  + 2 \E
\int_{0}^{\t^\eps}e^{\nu s} \tl Y_s^{\eps,n}
\d_{2, n}(\eps,   s) ds  \\
&:=  C_n(1,  \eps)   +  C_n(2, \eps) + C_n(3, \eps)  +  C_n(4, \eps) .
 \end{align*}
We have  that   for every $n \ge 1$ and
$\eps> 0$,
\begin{align}\label{basic-estim}
|\tl Y_0^{\eps,n}|^2 \le C_n(\eps).
\end{align}

\noindent{\sc Step 3 : Estimate of $C_n(1,\eps)$, $C_n(2,\eps)$ and $C_n(3,\eps)$.}
Assume w. l. o. g. that   $\nu$ and $\lambda$ have  the same sign.
From H\"older's inequality,  we deduce  that with $p=\lambda/\nu$, $q^{-1}+p^{-1}=1$,
\begin{equation}\label{estimC1}
0\le C_n(1, \eps) \le \Big(\E(e^{\l \t^\eps})\Big)^{1/p}
\Big(\E (|g(X_{\t^\eps}^\eps) - u^n(\widehat
X_{\t^\eps}^\eps)|^{2q})\Big)^{1/q}
\end{equation}
Fix  $n \in  \N$.  Thanks   to  the   tightness  of   $(X_s^\eps,
{\hat X}_s^\eps, \tl Y_s^{\eps,n})$, we deduce  from   Proposition 
\ref{theoerg}  that  for each $n\ge1$ fixed, $ C_n(2, \eps)  \xrightarrow[]{\eps \to 0} \;
0$  and $ C_n(3, \eps)  \xrightarrow[]{\eps \to 0} \;
0.$  \\

\noindent{\sc Step 4 : Estimate of $C_n(4, \eps)$.}
We shall take advantage of   Proposition \ref{regul-u}. For   this  end  for  any   $\d>0$ we  consider  a   function  $\f_\d  \in  \mathcal{C}(\R^d, [0, 1]) $  satisfying
$$
\f_\d(x) =
\begin{cases}
     1, & x  \in  G\setminus G^\d, \\
0,& \text{in } G^{2\d}.
\end{cases}
$$
We   have for   every  $\eps>0$ and  $n\ge1$,
\begin{align*}
C_n(4, \eps)
&=
2 \E\int_{0}^{\t^\eps}e^{\nu s} \tl Y_s^{\eps,n}
\d_{1, n}(\eps,   s) (1    -   \f_\d (\widehat X_{s}^\eps))ds  +
2\E\int_{0}^{\t^\eps}e^{\nu s} \tl Y_s^{\eps,n}
\d_{1, n}(\eps,   s) \f_\d (\widehat X_{s}^\eps)ds\\
&=C_n(4.1,\eps,\delta)+C_n(4.2,\eps,\delta).
\end{align*}
For  any    $\d>0$, all  the  arguments    in  the  integral  defining $C_n(4.1,\eps,\delta)$   are bounded, uniformly w. r. t. $\eps>0$.   So   using   Proposition \ref{regul-u},    we  deduce    that  thanks   to   Lebesgue's  theorem, uniformly w. r. t.  $\eps>0$,
\begin{equation}\label{estimC31}
\E\int_{0}^{\t^\eps}e^{\nu s} \tl Y_s^{\eps,n}\d_{1, n}(\eps,   s) (1    -   \f_\d (\widehat X_{s}^\eps))ds  
 \to   0,  \;  \text{as} \;  \;    n  \to  \infty.
\end{equation}
Moreover  we   have
$$ C_n(4.2,\eps,\delta)\le \disp  C_{\ref{yborne}}
\E\int_{0}^{\t^\eps}e^{\nu s} |\d_{1, n}(\eps,   s) | \f_\d (\widehat X_{s}^\eps)ds.$$
Using  Proposition \ref{hairer-Pard}, we  deduce  that for every   $t\ge 0$,
$$ \E
\int_{0}^{\t^\eps}  e^{\nu s} |
\d_{1, n}(\eps,   s)| \f_\d (\widehat{X}_{s}^\eps)ds  \xrightarrow{\eps \to  0}
\E\int_{0}^{\t} e^{\nu s}  |F_n(X_s)| \f_\d (X_{s})ds$$
with  $F_n(X_s) = \bar{L}   u^n (X_{s})\,  + \bar f(X_s, u^n (X_s), \dpa_x u^n(X_{s})),  \;  \; n\ge 1$.
Let us   prove that  
\begin{equation}\label{convC32}
 \lim_{\d \to  0}\sup_{n\ge1}\E\int_{0}^{\t} e^{\nu s}  |F_n(X_s)| \f_\d (X_{s})ds=0.
 \end{equation}
   To   this end   we  consider    two   cases :   \\

\noindent\textbf{Case  1:}  $\l < 0$. This   implies   $\nu   < 0$. Thanks  to   assumption  (H1.6) and Krylov's  estimate (see  \cite[Theorem 2.2]{Kry})   there   exists  an  integer   $p>d$  and    a    constant  $\tl K >0$  depending  on $d, p, \nu$  and  the   diameter    of  the    region $G$  such that
\begin{align*}
\E\int_{0}^{\t} e^{\nu s} \Big|F_n(X_s)\Big| \f_\d (X_{s})ds \le  \tl   K \Big(\int_{(0, \infty)\times G} e^{p \nu s} |F_n(x)|^p  \f_\d(x) d x d s\Big)^{1/p}
\end{align*}
 Then  H\"older's  inequality implies
 \begin{align*}
\sup_{n\ge1}\E\int_{0}^{\t} e^{\nu s} |F_n(X_s)| \f_\d (X_{s})ds \le  \tl   K \Big(\int_{(0, \infty)\times G} e^{p \nu s} \f_\d(x) d x d s\Big)^{1/2p} \Big(\sup_{n\ge1}\int_{(0, \infty)\times G} e^{p \nu s} |F_n(x)|^{2p}  d x d s\Big)^{1/2p}.
\end{align*}
Since   $\nu  < 0$   and     $F_n(\cdot)$ is bounded in $L^{2p}(G)$  (thanks  to  Proposition \ref{regul-u} (iv)),  the    last term    of  the  right   hand  side     is   finite.  Moreover  the   sequence    $(\f_\d(x))_{\d>0}$   is  decreasing  to   0 at any point $x\in G$,   as  $\d\downarrow 0$. As a  consequence    we   have   $$  \lim_{\d \to  0} \int_{(0, \infty)\times G} e^{p \nu s} \f_\d(x) d x d s  =    0.$$  \eqref{convC32} follows in case 1.  \\

\noindent\textbf{Case  2:}  $\l>0$. Let     $0  \vee \nu   <  \g   < \l$. Using   H\"older's  inequality     with  $p = \l/\g > 1$  and    $q$  s.t.   $p^{-1}  +   q^{-1}  =  1$,    we   have
\begin{align*}
 \E\int_{0}^{\t} e^{\nu s} |F_n(X_s)| \f_\d (X_{s})ds &=\E\int_{0}^{\t} e^{\g s} \f_\d (X_{s}) e^{(\nu  - \g) s} |F_n(X_s)| ds \\
 &\le  \Big(\E \int_0^\t e^{\l s} \f_\d (X_{s})ds\Big)^{1/p} \times
  \Big(\E \int_0^\t e^{q(\nu  - \g) s}  |F_n(X_s)|^q ds\Big)^{1/q}
  \end{align*}
Using again Krylov's  estimate,  we   deduce  that    (for   some   $m>d$  and   $\tl K>0$  depending    on  the  region $G,
q, \nu-\gamma$  and  $d$)
 $$\sup_{n\ge1}\E \int_0^\t e^{q(\nu  - \g) s}  |F_n(X_s)\Big|^q ds \le  \tl K  \Big(\sup_{n\ge1}\int_{(0,  \infty)\times G} e^{mq(\nu  - \g) s}  |F_n(x)\Big|^{mq}  dx d s\Big)^{1/m} < \infty,$$
 thanks  to   Proposition \ref{regul-u} (iv) and    $\nu  -  \g  < 0.$
 Moreover  by uniform   integrability,   we   have
 $$  \lim_{\d \to  0}  \E \int_0^\t e^{\l s} \f_\d (X_{s})ds   = 0.$$
  \eqref{convC32} follows in case 2. \\
   
   \noindent{\sc Step 5 : Proof of } (ii).
 It follows from  \eqref{convC32} that we can first choose $\delta(\eta)>0$ small enough, such that 
 \begin{equation}\label{be1}
 \sup_{n\ge1}\lim_{\eps\to0}C_n(4.2,\eps,\delta(\eta))\le \eta^2/3.
 \end{equation}
 From Proposition \ref{regul-u} (ii) and \eqref{estimC31}, we can next
  choose $n(\eta)$ large enough, such that  (with $p=\lambda/\nu$) both
 \begin{align}
 \sup_{x\in\partial G}|u^{n(\eta)}-g|(x)&\le\left(\sup_{\eps>0, x\in\overline{G}}\E_x(e^{\lambda\tau^\eps})\right)^{-1/2p}
\sqrt{ \eta^2/3},
 \nonumber
 \\ \label{be2}
 \sup_{\eps>0}C_{n(\eta)}(4.1,\eps,\delta(\eta))&\le\eta^2/3.
 \end{align}
 We now deduce from the above estimates, in particular \eqref{estimC1}, that
 \begin{align}\label{be3}
 \limsup_{\eps\to0}C_{n(\eta)}(1,\eps)&\le \eta^2/3,\\
 C_{n(\eta)}(2,\eps)+C_{n(\eta)}(3,\eps)&\to0,\quad\text{as }\ \eps\to0.
 \label{be4}
 \end{align}
(ii) now follows from \eqref{basic-estim}, \eqref{be1}, \eqref{be2}, \eqref{be3} and \eqref{be4}.
\end{proof}
\section{Proofs of Propositions and Corollary \ref{regul-u} to \ref{theoerg}}\label{section-proofs}
\subsection{Proof  of Proposition \ref{regul-u}}
To begin    with  let  us  establish  some   preliminaries    result. Since     $g \in \mathcal{C}^2(\dpa G)$  and  $\dpa G$  is  of   class  ${\cal  C}^2$ there   exists   a  function     $v \in  W^{2, p}(G)$   such  that $v(x)  =    g(x), \; x  \in  \dpa G.$\\
  Putting
$\Psi (x,r, q)  =  - \bar  Lv(x)  - \bar  f(x, v(x) + r,  \dpa_x v  + q),  \;  (x,  r,  q) \in  G\times  \R \times \R^d$,
we  have  the  following   consequence  of   \cite[Theorem 2]{Boc-Mur-Pue}.
 \begin{lem1} \label{lemme-u}Assume  that  (H1) and  (H2)  are   in force. Then  the  PDE
\begin{eqnarray}
\label{equation-ubar}
\begin{cases}
 \overline{L}   u(x)  +   \Psi(x, u(x), \dpa u(x))   = 0,  \quad x  \in  G,     \\
u (x) =  0,  \quad   x \in \dpa  G.
\end{cases}
\end{eqnarray}
admits at   least one  solution   $    u  \in   H_0^1(G)$.
\end{lem1}
\noindent  As  a  consequence,   we  have
\begin{propo}\label{solution-u}
The unique      viscosity  solution   $u$   of   equation   \eqref{lim-elliptic3}   satisfies
$$ u    \in  W^{2, p}(G),  \quad    \;  \text{for any}\; \;  p \ge 1.$$
\end{propo}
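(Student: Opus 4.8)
The plan is to produce a strong solution of \eqref{lim-elliptic3} lying in $W^{2,p}(G)$ for every $p$ by combining Lemma~\ref{lemme-u} with an elliptic bootstrap, and then to identify it with the viscosity solution $u$ via the uniqueness already recorded above. Concretely, let $\tl u\in H_0^1(G)$ be a solution of \eqref{equation-ubar} furnished by Lemma~\ref{lemme-u}, and set $w:=v+\tl u$, with $v\in W^{2,p}(G)$ the extension of $g$ fixed above ($v=g$ on $\dpa G$). By the definition of $\Psi$, $w$ is a weak solution of \eqref{lim-elliptic3} satisfying $w=g$ on $\dpa G$ in the trace sense, with a priori only $w\in H^1(G)$.

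\textbf{Bootstrap.} Assumption (H1.5) makes $A$ positive definite, so $\bar L$ is uniformly elliptic with constant coefficients, and $\dpa G\in\mathcal{C}^2$; hence the global $L^q$ (Calder\'on--Zygmund / Agmon--Douglis--Nirenberg) estimates are available: any $\z$ with $\bar L\z=h\in L^q(G)$ and $\z-v\in W_0^{1,q}(G)$ satisfies $\z\in W^{2,q}(G)$, with norm controlled by $\|h\|_{L^q}$ and $\|v\|_{W^{2,q}}$. Apply this to $w$, with $h:=\bar L w=-\bar f(\cdot,w,\dpa_x w)$; since $\bar f$ obeys the linear growth bound of (H2.2), $|h|\le K(1+|w|+|\dpa_x w|)$ on the bounded set $G$. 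From $w\in W^{1,2}(G)$ we get $h\in L^2(G)$, hence $w\in W^{2,2}(G)$; Sobolev embedding then gives $w\in W^{1,q_1}(G)$ with $1/q_1=1/2-1/d$ (any finite $q_1$ if $d\le 2$), hence $h\in L^{q_1}(G)$, hence $w\in W^{2,q_1}(G)$. Iterating, after finitely many steps the exponent passes $d$, and then $W^{2,q}(G)\hookrightarrow\mathcal{C}^1(\overline G)$ makes $\dpa_x w$ bounded, so $h\in L^\infty(G)$ and therefore $w\in W^{2,p}(G)$ for every $p\ge 1$.

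\textbf{Identification.} For $p>d$, $w\in W^{2,p}(G)\subset\mathcal{C}^1(\overline G)$ solves \eqref{lim-elliptic3} almost everywhere with $w=g$ on $\dpa G$, i.e.\ it is a continuous strong solution; since a continuous $W^{2,d}_{loc}$-strong solution of such a uniformly elliptic semilinear equation is in particular a viscosity solution, $w$ is a viscosity solution of \eqref{lim-elliptic3}. By the uniqueness of that viscosity solution --- established above through the representation $u(x)=Y_0^x$ and \cite[Corollary~6.96]{Par-Rascanu} --- we get $w=u$, and therefore $u\in W^{2,p}(G)$ for all $p\ge 1$.

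The bootstrap itself is routine; the points deserving care are (a) using the \emph{global} (up to $\dpa G$) $W^{2,p}$ estimate together with the inhomogeneous Dirichlet datum $g$, which is precisely why $v$ was taken in $W^{2,p}(G)$ for every $p$ and $\dpa G$ in $\mathcal{C}^2$, and (b) the implication ``strong solution $\Rightarrow$ viscosity solution'', which, combined with uniqueness, is what transfers the regularity to the specific function $u$ rather than merely to some solution produced by Lemma~\ref{lemme-u}.
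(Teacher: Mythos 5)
Your proposal is correct and follows essentially the same route as the paper: existence of an $H_0^1$ solution from Lemma~\ref{lemme-u}, an Agmon--Douglis--Nirenberg/Sobolev bootstrap using the linear growth of $\overline{f}$ from (H2.2) to reach $W^{2,p}(G)$ for all $p\ge1$, and identification with $u$ through the ``strong $\Rightarrow$ viscosity'' implication and the uniqueness of the viscosity solution from \cite[Corollary 6.96]{Par-Rascanu}. The only (cosmetic) difference is that you bootstrap $w=v+\tl u$ and invoke uniqueness for \eqref{lim-elliptic3} directly, whereas the paper bootstraps the zero-boundary part $\bar v$ and applies uniqueness to the shifted equation \eqref{equation-ubar}, then adds back $v\in W^{2,p}(G)$.
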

\begin{proof}
Let  us consider   $\bar   u  =      u  -    v$.
It   suffices   to   prove that
$\bar  u   \in W^{2, p}(G)  \cap   W_0^{1, p}(G) $,   $p\ge1$.
Let $\bar v \in  H_0^1(G)$  be  a solution of  \eqref{equation-ubar}.  It follows from $(H2.2)$   that
$\Psi(\cdot,\overline{v}(\cdot),\partial\overline{v}(\cdot))   \in  L^2(G)$. Consequently, from \cite{ADN},  $\bar   v \in W^{2, 2}(G)  \cap   W_0^{1, 2}(G)$.  We   then deduce that  $ \dpa_x \bar v  \in   H^1(G)  \hookrightarrow L^p(G)$ and   $\bar v \in L^p(G)$ for all   $p  \ge 2$
if $d\le2$, for $p=2d(d-2)^{-1}$ otherwise, by Sobolev embedding.
From  the  linear  growth   of   $\overline{f}$, we  deduce  that      $\Psi(\cdot,\overline{v}(\cdot),\partial\overline{v}(\cdot))   \in  L^p(G)$.  This   implies, again by \cite{ADN},
$$\bar  v    \in  W^{2, p}(G)  \quad   \text{and} \quad  \dpa_x  \bar   v   \in W^{1, p}(G). $$
 Using  again  the  Sobolev   embedding,  we  deduce   that $\bar  v$   and    $\dpa_x  \bar   v$  belong  to  $L^q(G)$   for    all $q$ if $d\le 4$, for $q=2d(d-4)^{-1}$ otherwise. Iterating this argument
 $\lceil \frac{d}{2}-1\rceil$
 times, we deduce that   $\bar   v \in W^{2, p}(G)$   for  all  $p\ge 1$.
Thus  $\bar  v$ is   a    viscosity solution   of      \eqref{equation-ubar}.  Now uniqueness of the  viscosity solution  of  the   elliptic  PDE \eqref{equation-ubar}  (see   \cite[Corollary 6.96]{Par-Rascanu})  implies   $\bar u   =  \bar  v$.  The result     follows since  $v \in W^{2, p}(G)$ for all $ p\ge1$.
\end{proof}
We  are   now  in  position   to   prove  Proposition \ref{regul-u}. To  this    purpose,    we  first    extend  the  function  $u$  as  an element  of  $ W^{2, p}(\R^d)$, which  is   possible  given  the  regularity  of  $\dpa G$  and $u$.
Let  $ \r :\R^d
\to \R$ be a smooth mollifier  with compact support, and define for
  $n  \ge 1,  \;    \r_n(x) = n^d \r(nx)$. We regularize $u$, the  solution  of  \eqref{lim-elliptic3},  by convolution : $u^n$  defined   as
    $u^n(x)    =   (u * \r_n)(x)$.
Thanks  to   Proposition  \ref{solution-u},     $u\in\mathcal{C}^1(\overline{G})$. This    implies that $(\dpa_x u^n)_{n\ge 1}$   is   uniformly  bounded and
\begin{equation}
\label{convergence-un}
(u^n,  \, \dpa_x u^n) \to    (u,\,  \dpa_x u) ,   \quad   as   \;  \;   n  \to   \infty,  \text{uniformly in} \;  \overline{G}.
\end{equation}
(i) and (ii) are established.
Let  us   prove   (iii).   
Since    $\overline{L}  u^n     =   (\overline{L}  u) * \r_n$,  we    have  
\begin{align*}
 \overline{L}  u^n(x)    +    \overline{f}(x,   u^n(x), \dpa_x u^n(x))   &= \Big[\overline{L}  u   +     \overline{f}(\cdot, u(\cdot),  \dpa_x(\cdot))\Big]*\r_n(x)   \\
&+  \overline{f}(x,   u^n(x), \dpa_x u^n(x))     -    \Big[\overline{f}(\cdot, u(\cdot),  \dpa_x(\cdot))\Big]*\r_n(x) \\
&:=A^n(x)      +   B^n(x)
\end{align*}
For   all  $x\in  G^\d,  \;    A^n(x)  =   0$  if $n$ is large  enough    such  that      $\text{Supp}(\r_n)  \subset B(0, \d)$, the  unit  ball   of    radius   $\d$. Moreover   since  $\overline{f}$ is  continuous   with  respect  to  its  second  and    third  arguments and  \eqref{convergence-un},  we     have $B^n(x)    \to    0$  as   $n\to   \infty$,  uniformly  w. r. t. $x$. 

Finally (iv) follows from the fact that, since $u$ has been extended to an element of $W^{2,p}(\R^d)$,
$u^n$ is bounded in $W^{2,p}(G)$.
\subsection{Proof   of Proposition \ref{lem:meyer}.}
From \eqref{Y-eps}, we deduce thanks to It\^o's formula
applied to the function  $(t,y)  \mapsto  e^{\l t}y^2$
\begin{align}\label{ito-meyer}
e^{\l t\wedge\tau^\eps} |Y_{t\wedge\tau^\eps}^{\eps,x}|^2 + \int_{t\wedge \t^\eps}^{\t^\eps} \, \,
e^{\l r} \, |Z_r^{\eps, x}|^2 \, d r = e^{\l \t^\eps}
|g(X_{\t^\eps}^\eps)|^2    - \int_{t\wedge \t^\eps}^{\t^\eps} \, \,
\l\,
e^{\l r} \, |Y_r^{\eps, x}|^2 \, d r \nonumber\\
+2 \int_{t\wedge \t^\eps}^{\t^\eps} \, \, e^{\l r} \, Y_r^{\eps, x}
\, f(\Th(\eps, r)) d r - 2 \int_{t\wedge \t^\eps}^{\t^\eps} \, \,
e^{\l r} \, Y_r^{\eps, x} \, Z_r^{\eps, x} \,d B_r.
\end{align}
Let   $0< \g<1$  and  $\b>0$ such    that   $\l =  2  \mu   +  \frac{K^2}{\g}  + \b$. Using    standard estimates, we  have
$$ 2 Y_r^{\eps, x}
\, f(\Th(\eps, r)) \;    \le   (2\mu  +  \frac{K^2}{\g}  +
\b)|Y_r^{\eps, x}|^2  +     \g  |Z_r^{\eps, x}|^2   + \frac{1}{\b}
|f(X_r^{\eps, x}, \overline{X}_r^{\eps, x}, 0, 0)|^2
$$
Hence   we deduce   from  \eqref{ito-meyer}
\begin{align}
e^{\l t\wedge\tau^\eps} |Y_{t\wedge\tau^\eps}^{\eps,x}|^2 + (1-\g) \int_{t\wedge \t^\eps}^{\t^\eps} \, \,
 e^{\l r}    |Z_r^{\eps, x}|^2
\, d r &\le K^2 \, e^{\l \t^\eps} + \frac{1}{\b}\int_{t\wedge
\t^\eps}^{\t^\eps} \, \, e^{\l r} \,
|f(X_r^{\eps, x}, \overline{X}_r^{\eps, x}, 0, 0)|^2 d r \nonumber\\
&- 2 \int_{t\wedge \t^\eps}^{\t^\eps} \, \, e^{\l r} \, Y_r^{\eps,
x} \, Z_r^{\eps, x} \,d B_r.\label{estim-Y}
\end{align}
It follows from Burkholder--Davis--Gundy's inequality that  there    exists   a    constant  $ C_{\ref{lem:meyer}} >0 $  such  that
$$
\E\Big[\sup_{0\le t\le \tau^\eps}e^{\l t}|Y_t^\eps|^2\, + \int_0^{\t^\eps} e^{\l
s}|Z_s^\eps|^2\, d
 s\Big]   \le   C_{\ref{lem:meyer}}\E\Big(K^2 \,e^{\l
\t^\eps} + \int_{0}^{\t^\eps} \, \, e^{\l r} \, |f(X_r^{\eps, x},
\overline{X}_r^{\eps, x}, 0, 0)|^2 d r \Big)
$$
which  is   enough  to   get  the desired result  thanks  to asumption (H2.2)  and  \eqref{cond-exp}. \hfill $\Box$
\subsection{Proof  of Corollary \ref{tightness}.}
Let   $T>0$. Note that  the  process $M_t^\eps  =  \disp \int_0^{t
\wedge \tau^\eps} \, Z_r^{\eps, x} \, d B_r,   0\le t \le T $
satisfies $CV_T^0 (M^\eps)  = 0.$ Using   standard estimates and   Doob's   inequality,   we     have  for    any  $\eps>0$,
$$
\sup_{0\le t \le T} \,\E  |M_t^\eps| \le\frac{1}{2} (1 + \E(\sup_{0\le t \le T} |M_t^\eps|^2))  \le\frac{1}{2} (1+ 4 \sup_{\eps>0} \, \E < M^\eps >_T)  $$
Since   for every
$0\le s  \le  T, \;  e^{\l s }(1 \vee  e^{- \l T})\ge 1$, we deduce    that
$$\sup_{\eps>0} \sup_{0\le t \le T} \,\E  |M_t^\eps|
\le   \frac{1}{2} (1  +  4 \sup_\eps \, \E \bigg( (1 \vee  e^{- \l
T}) \int_0^{T\wedge\tau^\eps} \, e^{\l s }|Z^{\eps,x}_s|^2 ds \bigg)   <  \infty.
$$
Consequently,   the sequence  $\{M_t^\eps, \; 0\le t \le T\}$  is tight in  ${\cal D} ([0, T],  \R^d)$. \\
Futhermore  let $0 = t_0 <   t_1 <... < t_n = T$ be a partition of
$[0, T]$, thanks to the assumptions on $f$ and standard estimates,
we have for all $\eps > 0$,
\begin{align*}
\E  \sum_{i = 0}^{n - 1}  \,  |\E (Y_{t_{i+1}}^\eps   - Y_{t_i}^\eps
|  {\cal F}_{t_i})|   &\le
 \E \int_0^{T\wedge \t^\eps}  |f(\overline{X}_s^\eps,\; X_s^\eps, \; Y_s^\eps, Z_s^\eps) |\, d s \\
&\le K  \;  \E (1 \vee  e^{- \l T})\int_0^{T\wedge \t^\eps} e^{\l s
}(1  +  |Y_s^\eps| +
| Z_s^\eps|) \, d s \\
&\le  K T  \,(1 \vee  e^{- \l T})  (1 +  \E \,   \sup_{0 \le s \le
T\wedge\t^\eps} \, e^{\l s } |Y_s^\eps|^2  + \E \int_0^{T\wedge\t^\eps} e^{\l s }
|Z_s^\eps|^2\, d s)
\end{align*}
From this and Proposition \ref{lem:meyer}, we deduce  that
 $$ \sup_{\eps>0} \Big(\sup_{0\le t  \le T\wedge\tau^\eps} \E|Y_t^\eps|  +  CV_T^0 (Y^\eps{\bf1}_{[0,\tau^\eps]})\Big) <\infty$$  which implies
  that  the sequence  $(Y_s^\eps)_{\eps > 0} $ satisfies the Meyer--Zheng tightness  criteria. \hfill $\Box$
\subsection{Proof  of Proposition \ref{theoerg}.}
Let  us   define   $\tl \Psi(x, y)    = \Psi(x, y)    -   \bar \Psi(y)$.
For every  $T>0$,  and   $\d>0$,   we   have  for any  $\nu < \l$,
$$ \Prb\left(\left|\int_{0}^{\t^\eps}e^{\nu r}\tl  \Psi(\bar{X}^\varepsilon_r,V^{\varepsilon}_r)\,dr\right| >\d\right)  \le   \Prb(\t^\eps > T)  +
\Prb\left(\left|\int_{0}^{T\wedge\t^\eps}e^{\nu r}\tl
\Psi(\bar{X}^\varepsilon_r,V^{\varepsilon}_r)\,dr\right| >\d\right) $$
Since  $\t^\eps  \Td  \t$,   the   sequence  $(\t^\eps,  \eps>0)$  is  tight  and   we  can  choose  $T$   large  enough  such  that  $\sup_{\eps>0} \Prb(\t^\eps > T) $  is arbitrary  small.  Moreover  the
second  term  of  the hand  right side  is  equal  to
$$\Prb\left(\left|\int_{0}^{T}e^{\nu r}\tl
\Psi(\bar{X}^\varepsilon_r,V^{\varepsilon}_r)\times 1_{\{0\le r \le
\t^\eps\}}\,dr\right|>\d\right).$$
Thanks     to   the     tightness   of  the  process $1_{\{0\le r \le \t^\eps\}}$
in  ${\cal D}([0,\infty))$,   \cite[Lemma 4.2]{Par1}
implies   that $$ \Prb\left(\left|\int_{0}^{T}e^{\nu r}\tl
\Psi(\bar{X}^\varepsilon_r,V^{\varepsilon}_r)\times 1_{\{0\le r \le
\t^\eps\}}\,dr\right|>\d\right) \to     0,\ \text{    as   }\eps   \to  0.$$
It remains to prove that the collection of random  variables
$\disp\left\{\int_{0}^{\t^\eps}e^{\nu r}\tl  \Psi(\bar{X}^\varepsilon_r,V^{\varepsilon}_r)\,dr,\ \eps>0\right\}$
is uniformly integrable. Since $|\tl\Psi(x,v)|\le 2M(1+|v|)$,
\begin{align}
\left|\int_{0}^{\tau^\eps}e^{\nu r}\tl\Psi(\bar{X}^\eps_r,V^\eps_r)\,dr\right|
\le2M \int_{0}^{\tau^\eps}e^{\nu r}(1+|V^\eps_r|)dr
\label{inegalite-erg}
\end{align}
Now   we  consider   two   cases : \\

\noindent \textbf{Case  1:}  $\l>0$.  We consider the case $\nu>0$ only, from which the result follows for $\nu\le0$.
\eqref{inegalite-erg} implies
\begin{align*}
\left|\int_{0}^{\tau^\eps}e^{\nu r}\tl\Psi(\bar{X}^\eps_r,V^\eps_r)\,dr\right|
&\le \frac{2M}{\nu}\left(e^{\nu\tau^\eps}+ 2e^{\nu\tau^\eps/2}\sup_{0\le r\le \tau^\eps}
e^{\nu r/2}|V^\eps_r|\right)
\end{align*}
The collection of random  variables $\{\xi_\eps:=e^{\nu\tau^\eps},\ \eps>0\}$ is tight since $\sup_{\eps>0}\E[|\xi_\eps|^p]<\infty$ with $p=\lambda/\nu>1$, by \eqref{cond-exp}. Now choose $2<p<2\lambda/\nu$, and let $q$ be such that $q^{-1}+p^{-1}=1$. Then from Young's inequality,
$$e^{\nu\tau^\eps/2}\sup_{0\le r\le \tau^\eps}e^{\nu r/2}|V^\eps_r|\le\frac{1}{p}
e^{p\nu\tau^\eps/2}+\frac{1}{q}\sup_{0\le r\le \tau^\eps}e^{q\nu r/2}|V^\eps_r|^q.$$
Tightness of the last right hand side follows from $p\nu/2<\lambda$ and $q\nu/2<\lambda$, since $q<2$,
\eqref{cond-exp} and \eqref{hyp-unif}. \\

\noindent \textbf{Case  2:}  $\l<0$.  Then $\nu<0$ as well.   We  have  for    any  $\eps>0$,
\begin{align*}
 \int_{0}^{\tau^\eps}e^{\nu r}(1+|V^\eps_r|)dr &=  \int_{0}^{\tau^\eps}e^{(\nu  -  \l) r} e^{\l r}(1+|V^\eps_r|)dr \\
 &\le  \frac{1}{\l - \nu}(1  +   \sup_{0\le r\le \t^\eps}e^{\l r}  |V^\eps_r|)
\end{align*}
Since  $\l<0$, we   have   $\E[(\sup_{0\le r\le \t^\eps}e^{\l r}  |V^\eps_r|)^2] \le  \E[\sup_{0\le r\le \t^\eps}e^{\l r}  |V^\eps_r|^2]$.  Hence using   standard  estimates  and \eqref{hyp-unif}, we  deduce  that    the  last     right  hand side  is    tight. \hfill $\Box$

\noindent\textbf{Acknowledgements }: The authors wish to thank Fran\c cois Hamel and Emmanuel Russ for valuable discussions concerning the proof of Proposition \ref{regul-u}, and Alessio Porretta for pointing out to us the reference  \cite{Boc-Mur-Pue}. \\
The work of the second author has been supported by LATP at
Universit\'e de Provence, Marseille and AIRES-Sud, a program from
the French Ministry of Foreign and European Affairs implemented by
the Institut de Recherche pour le D\'eveloppement (IRD--DSF).

\end{document}